\newtheorem*{rep@theorem}{\rep@title}
\newcommand{\newreptheorem}[2]{%
\newenvironment{rep#1}[1]{%
 \def\rep@title{#2 \ref{##1}}%
 \begin{rep@theorem}}%
 {\end{rep@theorem}}}
\numberwithin{equation}{section}
\newtheorem{theorem}[equation]{Theorem}
\newtheorem{proposition}[equation]{Proposition}
\newtheorem{lemma}[equation]{Lemma}
\newtheorem{corollary}[equation]{Corollary}
\theoremstyle{definition}
\newtheorem{definition}[equation]{Definition}
\newtheorem{example}[equation]{Example}
\newtheorem{discussion}[equation]{}
\theoremstyle{remark}
\renewcommand\p@enumii{}
\newcommand{\nospacepunct}[1]{\makebox[0pt][l]{\,#1}}
\renewcommand{\epsilon}{\varepsilon}
\renewcommand{\phi}{\varphi}
\renewcommand{\theta}{\vartheta}
\newcommand{\Inj}[1]{#1\text{-}\operatorname{Inj}}
\newcommand{\kbfcat}[1]{{\rm K}_{b}^{f}(#1)}
\newcommand{\dcat}[2][]{{\rm D}_{#1}(#2)}
\newcommand{\dfbcat}[1]{{\rm D}_{b}^{f}(#1)}
\newcommand{\dfmcat}[1]{{\rm D}_{-}^{f}(#1)}
\newcommand{\dfpcat}[1]{{\rm D}_{+}^{f}(#1)}
\newcommand{\Perf}[1]{\operatorname{Perf}(#1)}
\DeclareMathOperator{\add}{add}
\DeclareMathOperator{\Add}{Add}
\newcommand{\cat}[1]{{\mathcal{#1}}}
\DeclareMathOperator{\ctr}{Z}
\DeclareMathOperator{\codim}{codim}
\DeclareMathOperator{\cogin}{cogin}
\DeclareMathOperator{\coh}{H}
\newcommand{\complete}[1]{\widehat{#1}}
\newcommand{\env}[1]{{#1}^e}
\newcommand{\gentime}{\text{\Clocklogo}}
\DeclareMathOperator{\gin}{gin}
\DeclareMathOperator{\gldim}{gldim}
\DeclareMathOperator{\height}{height}
\DeclareMathOperator{\Hom}{Hom}
\DeclareMathOperator{\id}{id}
\newcommand{\Kos}{K}
\DeclareMathOperator{\level}{level}
\newcommand{\Lotimes}{\otimes^{\mathbf{L}}}
\DeclareMathOperator{\Max}{Max}
\newcommand{\op}[1]{#1^{op}}
\DeclareMathOperator{\Prod}{Prod}
\DeclareMathOperator{\RHom}{\mathbf{R}Hom}
\DeclareMathOperator{\smd}{smd}
\DeclareMathOperator{\Spec}{Spec}
\DeclareMathOperator{\Supp}{Supp}
\DeclareMathOperator{\supp}{supp}
\newcommand{\susp}{\Sigma}
\DeclareMathOperator{\thick}{thick}
\newcommand{\BZ}{\mathbb{Z}}
\newcommand{\set}[2]{\left\{#1 \,\middle|\, #2\right\}}
\title[Local to global principles for level]{Local to global principles\\for generation time over\\commutative noetherian rings}
\author[J.~C.~Letz]{Janina C.~Letz}
\email{jletz@math.uni-bielefeld.de}
\address{Fakult\"at f\"ur Mathematik,
         Universit\"at Bielefeld,
         33501 Bielefeld,
         Germany}
\subjclass[2010]{18E30 (primary); 16E35, 13B30}
\keywords{Local to global principle, Generation time, Level, Coghost}
\begin{document}

\begin{abstract}
In the derived category of modules over a commutative noetherian ring a complex $G$ is said to generate a complex $X$ if the latter can be obtained from the former by taking summands and finitely many cones. The number of cones required in this process is the generation time of $X$. In this paper we present some local to global type results for computing this invariant, and discuss applications.
\end{abstract}

\maketitle

\section{Introduction}

The goal of this paper is to develop techniques for computing generation time of complexes over commutative noetherian rings. In the derived category of a commutative noetherian ring $R$, a complex $G$ \emph{generates} a complex $X$ if the latter is obtained from the former by taking suspensions, finite direct sums, direct summands and (mapping) cones. The minimal number of cones required is the \emph{generation time}, or \emph{level}, of $X$ with respect to $G$ and denoted $\level_R^G(X)$. Bondal and van den Bergh introduced the notion of generation in \cite{Bondal/VanDenBergh:2003}; see also \cite{Rouquier:2008}. The notation and terminology of level are adopted from \cite{Avramov/Buchweitz/Iyengar/Miller:2010}. 

Level has connections to other, more familiar, invariants. When $G = R$ and $X$ is a finitely generated module, level of $X$ with respect to $R$ is the projective dimension of $X$; see \cite{Christensen:1998}. When $X$ is a complex with finitely generated total homology, $\level_R^R(X)$ is bounded above by the projective dimension of $X$---that is the minimal length of a projective resolution---but typically level is smaller. 

When $R$ is a semilocal ring with Jacobson radical $J(R)$, level of a module with respect to $G = R/J(R)$ is the Loewy length. Level with respect to $G$ gives an extension of this notion to complexes. 

Unlike projective dimension and Lowey length, level behaves better under functors of derived categories. This is because such a functor need not map projectives to projectives or semisimple modules to semisimple modules. This flexibility afforded by level becomes useful. 

Despite their utility, there are few results on the behavior of level even under functors induced by a change of rings. This paper tracks the behavior of level under standard commutative algebra operations, notably localizations and completions. 

The main result considers the localizations of a commutative noetherian ring:

\begin{repthm}{localGlobalFiniteLevel}
Let $R$ be a commutative noetherian ring, and $G$ and $X$ complexes of $R$-modules with bounded and degreewise finitely generated homology. If $\level_{R_\mathfrak{p}}^{G_\mathfrak{p}}(X_\mathfrak{p}) < \infty$ for all prime ideals $\mathfrak{p}$ of $R$, then $\level_R^G(X) < \infty$.
\end{repthm}

This statement should be compared with, and extends the result of Bass and Murthy \cite[Lemma~4.5]{Bass/Murthy:1967} that a finitely generated module has finite projective dimension if it has finite projective dimension locally. 

The theorem has the following applications:

From Hopkins' \cite[Theorem~11]{Hopkins:1987} and Neeman's \cite[Lemma~1.2]{Neeman:1992b} result about perfect complexes, we deduce in Theorem~\ref{finInjDimCxSupportGeneration} that for complexes $X$, $Y$ of finite injective dimension with degreewise finitely generated and bounded homology, $X$ generates $Y$ if and only if the support of $X$ contains the support of $Y$. 

One can also track the behavior of proxy smallness, introduced in \cite{Dwyer/Greenlees/Iyengar:2006a,Dwyer/Greenlees/Iyengar:2006b}. A complex $X$ is \emph{proxy small} if it generates a perfect complex $Y$ with the same support as $X$. We prove that $X$ is proxy small if and only if it is proxy small locally; see Proposition~\ref{localGlobalProxySmall}. By \cite{Pollitz:2019}, proxy small objects in $\dfbcat{R}$ characterize whether a local ring $R$ is complete intersection. We conclude the proxy smallness property can be used to characterize \emph{locally} complete intersection rings. 

The main tool to prove Theorem~\ref{localGlobalFiniteLevel} is a converse coghost lemma proved by Oppermann and {\v{S}}\v{t}ov{\'{\i}}{\v{c}}ek \cite{Oppermann/Stovicek:2012}. A map is \emph{$G$-coghost} if it cannot be detected by post-composition with any suspension of $G$. The \emph{coghost index} of $X$ with respect to $G$ is the minimal number $n$ for which every $n$-fold composition of $G$-coghost maps that ends at $X$ is zero. It is well-known that coghost index is less than or equal to level; see \cite{Kelly:1965}. By the converse coghost lemma one has an equality in the bounded derived category of a commutative noetherian ring.

As a consequence of the converse coghost lemma, we show that computing level reduces to complete local rings. 

Section~\ref{sec:levelCoghost} recalls the definitions of level and coghost index. There we state the converse coghost lemma and discuss some aspects of the proof, as well as deduce a converse \emph{ghost} lemma whenever the ring has a dualizing complex. Localizations are discussed in Section~\ref{sec:localGlobal}, applications are discussed in Section~\ref{sec:application} and proxy smallness in Section~\ref{sec:VirtualProxySmallness}. 

\section{Level and coghost}\label{sec:levelCoghost}

In this section, we recall the definitions for level and (co)ghost index from \cite{Bondal/VanDenBergh:2003} and \cite{Avramov/Buchweitz/Iyengar/Miller:2010}. Then the converse coghost lemma is stated, and a converse ghost lemma is proved. 

\subsection*{Level} Let $\cat{T}$ be a triangulated category and $\cat{C}$ a subcategory of $\cat{T}$. Then 
\begin{enumerate}
\item $\add(\cat{C})$ denotes the smallest strictly full subcategory of $\cat{T}$ containing $\cat{C}$ that is closed under finite direct sums and suspensions, and
\item $\smd(\cat{C})$ denotes the smallest strictly full subcategory of $\cat{T}$ containing $\cat{C}$ that is closed under direct summands. 
\end{enumerate}
If $\cat{C}_1$ and $\cat{C}_2$ are subcategories of $\cat{T}$, then $\cat{C}_1 \star \cat{C}_2$ is the strictly full subcategory containing all objects $X$, such that there exists an exact triangle
\[
Y \to X \to Z \to \susp Y
\]
in $\cat{T}$ with $Y \in \cat{C}_1$ and $Z \in \cat{C}_2$. Set
\[
\cat{C}_1 \diamond \cat{C}_2 \colonequals \smd(\add(\cat{C}_1) \star \add(\cat{C}_2))\,.
\]
Recall $\cat{S}$ is a \emph{thick} subcategory of $\cat{T}$, if it is a triangulated subcategory and is closed under direct summands. 

\begin{definition}
For a subcategory $\cat{C}$ of $\cat{T}$ the \emph{$n$-th thickening} is
\[
\thick_\cat{T}^n(\cat{C}) \colonequals \begin{cases}
\{0\} & n=0 \\
\smd(\add(\cat{C})) & n=1 \\
\thick_\cat{T}^{n-1}(\cat{C}) \diamond \thick_\cat{T}^1(\cat{C}) & n \geq 2\,.
\end{cases}
\]
The \emph{level} of an object $X$ in $\cat{T}$ with respect to $\cat{C}$ is
\[
\level_\cat{T}^\cat{C}(X) \colonequals \inf\set{n \geq 0}{X \in \thick_\cat{T}^n(\cat{C})}\,.
\]
\end{definition}

The union of all thickenings of a subcategory $\cat{C}$ is the smallest thick subcategory of $\cat{T}$ containing $\cat{C}$, denoted $\thick_\cat{T}(\cat{C})$. Thus the thickenings give a filtration of $\thick_\cat{T}(\cat{C})$. Level behaves nicely with respect to direct sums and exact triangles; see \cite[Lemma~2.4]{Avramov/Buchweitz/Iyengar/Miller:2010}. In the following, we are interested in the generation by a single object $G$, and in this case we write $\level^G$ for $\level^{\{G\}}$. 

\begin{discussion} \label{levelThickSubcat}
Let $\cat{T}$ be a triangulated category and $\cat{S}$ a thick subcategory. For $\cat{C}$ a subcategory of $\cat{S}$, and $X$ in $\cat{S}$ one has
\[
\thick_\cat{T}^n(\cat{C}) = \thick_\cat{S}^n(\cat{C}) \quad \text{and} \quad \level_\cat{T}^\cat{C}(X) = \level_\cat{S}^\cat{C}(X)\,.
\]
\end{discussion}

Given an exact functor $\mathsf{f} \colon \cat{S} \to \cat{T}$ of triangulated categories, for a subcategory $\cat{C}$ of $\cat{C}$ and an object $X$ of $\cat{S}$ we have the inequality
\begin{equation}\label{eq:levelExactFunctor}
\level_\cat{S}^\cat{C}(X) \geq \level_\cat{T}^{\mathsf{f}(\cat{C})}(\mathsf{f}(X))\,.
\end{equation}

\subsection*{Coghost} To show that \eqref{eq:levelExactFunctor} is, for certain functors $\mathsf{f}$, an equality, we utilize the connection between level and coghost index given by the converse coghost lemma.

\begin{definition}
Let $\cat{C}$ be a full subcategory of $\cat{T}$. A morphism $f\colon X \to Y$ is \emph{$\cat{C}$-coghost}, if the induced map
\[
f^* \colon \Hom_\cat{T}(Y,\susp^n C) \to \Hom_\cat{T}(X,\susp^n C)
\]
is zero for any $C \in \cat{C}$ and any integer $n$.

A map $f\colon X \to Y$ is \emph{$n$-fold $\cat{C}$-coghost}, if it can be written as a composition of $n$ $\cat{C}$-coghost maps. The \emph{coghost index with respect to $\cat{C}$} is defined by
\[
\cogin_\cat{T}^\cat{C}(Y) \colonequals \inf\set{n \geq 0}{\text{all $n$-fold $\cat{C}$-coghost maps } X \to Y \text{ are zero}}\,.
\]
\end{definition}

Just as for level we are interested in the case where $\cat{C}$ consists of one object $G$. Then we write $\cogin^G$ for $\cogin^{\{G\}}$. 

Unlike level, coghost index may depend on the ambient category. More precisely given a thick subcategory $\cat{S} \subset \cat{T}$, one has
\[
\cogin^\cat{C}_\cat{S}(X) \leq \cogin^\cat{C}_\cat{T}(X)
\]
for $\cat{C}$ a subcategory of $\cat{S}$, and $X$ in $\cat{S}$. 

Similar to coghost maps, \emph{ghost} maps are the maps that become zero by pre-composition with any map from a suspension of an object in $\cat{C}$. Then the \emph{ghost index} is
\[
\gin_\cat{T}^\cat{C}(X) \colonequals \inf\set{n \geq 0}{\text{all $n$-fold $\cat{C}$-ghost maps } X \to Y \text{ are zero}}\,.
\]
The ghost and coghost maps are dual to each other in the sense that $f$ is ghost in $\cat{T}$ if and only if $f$ is coghost in $\op{\cat{T}}$. The same holds for ghost and coghost index:
\begin{equation}\label{eq:dualCoGhost}
\gin_\cat{T}^\cat{C}(X) = \cogin_{\op{\cat{T}}}^\cat{C}(X)\,.
\end{equation}

It is well known that level and coghost index always satisfy
\begin{equation}\label{eq:coghostLemma}
\cogin_\cat{T}^\cat{C}(X) \leq \level_\cat{T}^\cat{C}(X)\,.
\end{equation}
This is called the coghost lemma; see \cite{Kelly:1965}. The same inequality holds when replacing $\cogin$ by $\gin$. 

\begin{discussion} \label{generalConverseCoGhostLemma}
We do not know whether level and coghost index (or ghost index) are equal in an arbitrary triangulated category. Some partial converses are known:

If every object has a left (right) approximation by a direct sum of suspension of objects in $\cat{C}$ then the converse coghost (ghost) lemma holds; see \cite[Lemma~1.3]{Beligiannis:2008}. 

In the bounded derived category of a ring $R$, Christensen \cite{Christensen:1998} showed that level and ghost index with respect to $R$ are the same. 

The case that is relevant for this paper is due to Oppermann and {\v{S}}\v{t}ov{\'{\i}}{\v{c}}ek; see \cite{Oppermann/Stovicek:2012}. They show that in the bounded derived categories of Noether algebras level and coghost index agree for any generator. This result is discussed below. We focus on commutative noetherian rings; for a discussion on Noether algebras, see \ref{NoetherAlgebra}. 
\end{discussion}

\subsection*{Converse coghost lemma} For a noetherian ring $R$, the derived category of left $R$-modules is denoted $\dcat{R}$. The full subcategory of complexes with finitely generated total homology, that is $\coh(X) = \bigoplus_{i \in \BZ} \coh_i(X)$ is finitely generated, is denoted $\dfbcat{R}$. This is a thick subcategory of $\dcat{R}$. By \ref{levelThickSubcat}, it does not matter whether $\thick^n(G)$ and $\level^G(X)$ are calculated in $\dcat{R}$ or $\dfbcat{R}$ (or any bounded above/below derived category of (finitely generated) left $R$-modules) for objects $G$ and $X$ in $\dfbcat{R}$. They only depend on $R$, so we write
\[
\thick_R^n(G) \colonequals \thick_{\dcat{R}}^n(G) \quad\text{and} \quad \level_R^G(X) \colonequals \level_{\dcat{R}}^G(X)\,.
\]

In the following $R$ is a commutative noetherian ring, unless mentioned otherwise. 

\begin{discussion} \label{converseCoghostLemma} 
For the derived category of complexes with finitely generated total homology, \cite[Theorem~24]{Oppermann/Stovicek:2012} established a converse coghost lemma: Let $R$ be a commutative noetherian ring and $G$ an object in $\dfbcat{R}$. Then for any $X$ in $\dfbcat{R}$
\[
\cogin_{\dfbcat{R}}^G(X) = \level_R^G(X)\,.
\]
\end{discussion}

Given $n = \level_R^G(X)$ this result guarantees the existence of a nonzero $(n-1)$-fold $G$-coghost map with target $X$. This is useful, since under a faithful functor this composition stays nonzero. So if the functor preserves coghost maps, coghost index behaves the opposite way from level, that is, it does not decrease, while level does not increase under a change of rings as described in \eqref{eq:levelExactFunctor}. 

We recall the outline of the proof and some intermediate results from \cite{Oppermann/Stovicek:2012}, which we use to prove Theorem~\ref{localGlobalFiniteLevel}. 

Let $\dfpcat{R}$ denote the full subcategory of $\dcat{R}$ of bounded below complexes with degreewise finitely generated homology, that is $\coh_i(X) = 0$ for $i \ll 0$ and $\coh_i(X)$ finitely generated for all $i$. Let $\Prod_+(G)$ be the smallest full subcategory of $\dfpcat{R}$, that contains $G$ and is closed under suspensions and all products that exist in $\dfpcat{R}$. 

\begin{discussion} \label{covariantlyFiniteDplus}
Fix $G$ in $\dfbcat{R}$. Then every object $X$ in $\dfpcat{R}$ has a left $\Prod_+(G)$-ap\-prox\-i\-ma\-tion: There exists a map $m(X)\colon X \to H$ with $H$ in $\Prod_+(G)$, such that any map $X \to H'$ with $H'$ in $\Prod_+(G)$ factors through $m(X)$.

By the proof of the partial converse of the coghost lemma, see for example \cite[1.3]{Beligiannis:2008}, there exists a sequence of $G$-coghost maps associated to left $\Prod_+(G)$-ap\-prox\-i\-ma\-tions:
\[
\begin{tikzcd}[column sep=small]
X = X^0 \ar[rd] & & X^1 \ar[ll,"f^1" swap] \ar[rd] & & X^2 \ar[ll,"f^2" swap] \ar[rd] & & \cdots \nospacepunct{.} \ar[ll,"f^3" swap] \\
 & H^0 \ar[ru,"+1" swap] & & H^1 \ar[ru,"+1" swap] & & H^2 \ar[ru,"+1" swap]
\end{tikzcd}
\]
The horizontal maps are $G$-coghost, and the maps $X^i \to H^i$ are left $\Prod_+(G)$-approximations. Moreover, one has
\begin{equation*} \label{eq:converseCoghostD+}
\level_R^{\Prod_+(G)}(X) = \inf\set{n \geq 0}{(f^1 \circ \cdots \circ f^n) = 0 \text{ in } \dcat{R}} = \cogin_{\dfpcat{R}}^{\Prod_+(G)}(X)\,.
\end{equation*}
\end{discussion}

An object $C$ in an additive category $\cat{C}$ is \emph{cocompact}, if for any family of objects $\cat{X}$ in $\cat{C}$ whose product exists in $\cat{C}$ the natural map
\[
\bigoplus_{X \in \cat{X}} \Hom_\cat{C}(X,C) \to \Hom_\cat{C}\left(\prod_{X \in \cat{X}} X,C\right)
\]
is an isomorphism. 

\begin{discussion} \label{cocompactProd}
By \cite[Theorem~18]{Oppermann/Stovicek:2012}, the cocompact objects in $\dfpcat{R}$ are precisely the bounded complexes, that is the objects in $\dfbcat{R}$. Then given $X$ and $G$ in $\dfbcat{R}$, by \cite[Proposition~2.2.4]{Bondal/VanDenBergh:2003},
\[
\level_R^{\Prod_+(G)}(X) = \level_R^G(X)\,.
\]
\end{discussion}

\begin{discussion} \label{coghostRestrict}
Last, one shows coghost index in $\dfbcat{R}$ is the same as coghost index in $\dfpcat{R}$. Given a nonzero composition of $G$-coghost maps in $\dfpcat{R}$, it is possible to construct a nonzero composition of $G$-coghost maps in $\dfbcat{R}$:

For $G$-coghost maps $f^i\colon X^i \to X^{i-1}$ in $\dfpcat{R}$ for $1 \leq i \leq n$ with $X = X^0$ in $\dfbcat{R}$, there exists a commutative diagram
\[
\begin{tikzcd}
X^n \ar[r,"f^n"] & X^{n-1} \ar[r,"f^{n-1}"] & \cdots \ar[r,"f^2"] & X^1 \ar[r,"f^1"] & X \\
Y^n \ar[r,"g^n"] \ar[u] & Y^{n-1} \ar[r,"g^{n-1}"] \ar[u] & \cdots \ar[r,"g^2"] & Y^1 \ar[r,"g^1"] \ar[u] & X \ar[u,equal]
\end{tikzcd}
\]
where the $Y^i$'s are perfect and the horizontal maps are $G$-coghost. Moreover, the composite map in the top row is zero if and only if the composite map in the bottom row is zero.
\end{discussion}

This concludes the outline of the proof of the converse coghost lemma. 

All steps but \ref{coghostRestrict} of this proof can be adjusted by replacing coghost by ghost maps and $\dfpcat{R}$ by $\dfmcat{R}$. In the last step, the complex $Y^i$ is a truncation of a projective resolution of $X^i$. Here one would need to replace the projective resolution by an injective resolution. But in general, there are no nonzero finitely generated injective modules; so the truncation of an injective resolution need not lie in $\dfbcat{R}$. It seems a converse ghost lemma cannot be proven similarly. However, it is still possible to establish a converse ghost lemma if the ring has a dualizing complex. 

\subsection*{Dualizing complex}\label{subsec:dualizingComplex} 

Let $\mathsf{d} \colon \cat{S} \to \op{\cat{T}}$ and $\mathsf{d}' \colon \cat{S} \to \op{\cat{T}}$ be a duality of triangulated categories in the sense that $\mathsf{d}$ and $\mathsf{d}'$ are contravariant functors, with $\mathsf{d} \mathsf{d}' \cong \id_{\cat{S}}$ and $\mathsf{d}' \mathsf{d} \cong \id_{\cat{T}}$. The duality interchanges ghost and coghost maps, so that
\begin{equation}\label{eq:dualityCoGhost}
\gin_\cat{S}^G(X) = \cogin_{\cat{T}}^{\mathsf{d}(G)}(\mathsf{d}(X)) \quad\text{and}\quad \cogin_\cat{S}^G(X) = \gin_{\cat{T}}^{\mathsf{d}(G)}(\mathsf{d}(X))\,.
\end{equation}
Thus the converse coghost lemma holds for $G$ in $\cat{S}$ if and only if the converse ghost lemma holds for $\mathsf{d}(G)$ in $\cat{T}$. 

The dualizing complex gives a class of dualities on the derived categories. Let $S$ be a left noetherian ring and $R$ a right noetherian ring. For the definition of a dualizing complex $\omega$ of the ordered pair $\langle S,R \rangle$, see \cite[Definition~1.1]{Christensen/Frankild/Holm:2006}. If $R$ is additionally left noetherian, there exists a contravariant auto-equivalence
\[
\begin{tikzcd}[column sep=8em]
\dfbcat{S} \ar[r,"{\RHom_S(-,\omega)}",shift left] & \ar[l,"{\RHom_{\op{R}}(-,\omega)}",shift left] \dfbcat{\op{R}}\nospacepunct{;}
\end{tikzcd}
\]
see \cite[Proposition~3.4]{Iyengar/Krause:2006}. These functors send ghost maps to coghost maps and vice versa. 

\begin{discussion}
Let $S$ be a left noetherian ring and $R$ a commutative noetherian ring with $\omega$ a dualizing complex of $\langle S,R \rangle$. Fix $G$ in $\dfbcat{S}$. By the discussion above and the converse coghost lemma~\ref{converseCoghostLemma}, for any $X$ in $\dfbcat{S}$ we obtain
\[
\gin_{\dfbcat{S}}^G(X) = \level_S^G(X)\,.
\]
\end{discussion}

If $R$ is a commutative noetherian ring, the definition of a dualizing complex of $\langle R,R \rangle$ coincides with Grothendieck's definition of a dualizing complex; see \cite[V \S 2]{Hartshorne:1966}. Then $R$ has a dualizing complex if and only if it is the homomorphic image of a Gorenstein ring of finite Krull dimension; see \cite[Corollary~1.4]{Kawasaki:2002}. So for any such ring, the converse ghost lemma holds. 

\subsection*{Ring maps} Let $\phi \colon R \to S$ be a ring map with $R$ a commutative noetherian ring and $S$ a noetherian ring. Then there are the adjoint functors
\begin{equation}\label{eq:adjunction}
\begin{tikzcd}[column sep=wider]
\dcat{R} \ar[r,shift left,"S \Lotimes_R -"] & \dcat{S} \ar[l,shift left,"{\text{restriction}}"]\nospacepunct{.}
\end{tikzcd}
\end{equation}
If $\phi$ is flat, then $S \Lotimes_R -$ restricts to a functor from $\dfbcat{R}$ to $\dfbcat{S}$. 

\begin{discussion} \label{faithfullyFlatCoghost}
Let $\phi$ be a flat map, $G \in \dfbcat{R}$, and $f$ a $G$-coghost map in $\dfbcat{R}$. Then $\RHom_R(f,G) \simeq 0$. Since $S$ is flat, we have
\begin{equation*}
0 \simeq S \Lotimes_R \RHom_R(f,G) \simeq \RHom_R(f,S \Lotimes_R G)\,.
\end{equation*}
For the second quasi-isomorphism, the proof that the natural map from left to right is quasi-isomorphic is standard. Thus $f$ is $S \Lotimes_R G$-coghost, and, by adjunction, the map $S \Lotimes_R f$ is $S \Lotimes_R G$-coghost in $\dfbcat{S}$.

If additionally $\phi$ is faithful, then $S \Lotimes_R -$ is faithful, and thus for any nonzero map $f$, the map $S \Lotimes_R f$ is nonzero. 
\end{discussion}

The following answers a question posed in \cite[Remarks~9.6]{Dwyer/Greenlees/Iyengar:2006b}. 

\begin{corollary} \label{levelFaithfullyFlat}
Let $\phi\colon R \to S$ be a faithfully flat ring map with $R$ a commutative noetherian ring and $S$ a noetherian ring, so that $R$ acts centrally on $S$. For $X, G \in \dfbcat{R}$, one has $\level_R^G(X) = \level_S^{S \Lotimes_R G}(S \Lotimes_R X)$.
\end{corollary}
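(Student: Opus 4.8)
The plan is to prove the inequality $\level_R^G(X) \leq \level_S^{S \Lotimes_R G}(S \Lotimes_R X)$; the reverse inequality is already supplied by \eqref{eq:levelExactFunctor} applied to the functor $S \Lotimes_R -$. For the non-trivial direction I would translate everything into coghost language via the converse coghost lemma~\ref{converseCoghostLemma}, which applies since $R$ is commutative noetherian and $G, X \in \dfbcat{R}$. Set $n \colonequals \level_R^G(X)$. If $n = 0$ the statement is clear, so assume $n \geq 1$. By \ref{converseCoghostLemma} there exists a nonzero $(n-1)$-fold $G$-coghost map $f\colon W \to X$ in $\dfbcat{R}$, say $f = f^1 \circ \cdots \circ f^{n-1}$ with each $f^i$ a $G$-coghost map.

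Next I would apply the functor $S \Lotimes_R -$ to this composition. Since $\phi$ is flat, this functor carries $\dfbcat{R}$ to $\dfbcat{S}$, and by \ref{faithfullyFlatCoghost} each $S \Lotimes_R f^i$ is an $(S \Lotimes_R G)$-coghost map in $\dfbcat{S}$. Hence $S \Lotimes_R f = (S \Lotimes_R f^1) \circ \cdots \circ (S \Lotimes_R f^{n-1})$ is an $(n-1)$-fold $(S \Lotimes_R G)$-coghost map in $\dfbcat{S}$. Because $\phi$ is additionally faithful, the second paragraph of \ref{faithfullyFlatCoghost} gives that $S \Lotimes_R -$ is a faithful functor, so $S \Lotimes_R f$ is again nonzero. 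Therefore there is a nonzero $(n-1)$-fold $(S \Lotimes_R G)$-coghost map with target $S \Lotimes_R X$ in $\dfbcat{S}$, which by the definition of coghost index yields
\[
\cogin_{\dfbcat{S}}^{S \Lotimes_R G}(S \Lotimes_R X) \geq n = \level_R^G(X)\,.
\]

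Finally I would invoke the coghost lemma~\eqref{eq:coghostLemma} in the category $\dfbcat{S}$ — noting that $S \Lotimes_R G$ and $S \Lotimes_R X$ both lie in $\dfbcat{S}$ — to conclude
\[
\level_S^{S \Lotimes_R G}(S \Lotimes_R X) \geq \cogin_{\dfbcat{S}}^{S \Lotimes_R G}(S \Lotimes_R X) \geq \level_R^G(X)\,,
\]
which is the desired inequality. Combined with \eqref{eq:levelExactFunctor} this gives the equality. I do not expect a serious obstacle here: the proof is essentially a bookkeeping argument that packages the converse coghost lemma with the two parts of \ref{faithfullyFlatCoghost}. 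The one point requiring a little care is checking that all the complexes produced along the way (the $W$, the $X^i$, and their derived tensor products) genuinely remain in the finite bounded derived categories so that both \ref{converseCoghostLemma} and \eqref{eq:coghostLemma} are legitimately available on each side — this is exactly where flatness of $\phi$ is used, to keep $S \Lotimes_R -$ within $\dfbcat{-}$.
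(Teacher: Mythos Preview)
Your proof is correct and follows essentially the same route as the paper's: use \eqref{eq:levelExactFunctor} for one inequality, and for the other combine the converse coghost lemma~\ref{converseCoghostLemma} over $R$, the preservation of nonzero coghost compositions under $S \Lotimes_R -$ from \ref{faithfullyFlatCoghost}, and the ordinary coghost lemma~\eqref{eq:coghostLemma} over $S$. The paper compresses this into the single chain $\level_R^G(X) = \cogin^G_{\dfbcat{R}}(X) \leq \cogin^{S \Lotimes_R G}_{\dfbcat{S}}(S \Lotimes_R X) \leq \level^{S \Lotimes_R G}_S(S \Lotimes_R X)$, whereas you spell out the nonzero $(n-1)$-fold coghost map explicitly; but the content is identical.
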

\begin{proof}
This follows immediately from the converse coghost lemma~\ref{converseCoghostLemma}
\begin{equation*}
\level_R^G(X) = \cogin^G_{\dfbcat{R}}(X) \leq \cogin^{S \Lotimes_R G}_{\dfbcat{S}}(S \Lotimes_R X) \leq \level^{S \Lotimes_R G}_S(S \Lotimes_R X)\,.
\end{equation*}
The inequality in the middle holds by \ref{faithfullyFlatCoghost}. 
\end{proof}

Note, the proof does not require that the converse coghost lemma holds in $\dfbcat{S}$. 

\begin{corollary} \label{levelCompletion}
Let $R$ be a commutative noetherian ring and let $\complete{(-)}$ be the completion with respect to an ideal $I$ in the Jacobson radical. Then for any $X$, $G$ in $\dfbcat{R}$ we have $\level_R^G(X) = \level_{\complete{R}}^{\complete{G}}(\complete{X})$. \qed
\end{corollary}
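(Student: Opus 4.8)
The plan is to recognize the $I$-adic completion as base change along a faithfully flat ring map and then apply Corollary~\ref{levelFaithfullyFlat}. So the first step is to check the hypotheses of that corollary for the canonical map $\phi \colon R \to \complete{R}$. Since $R$ is noetherian, $\complete{R}$ is flat over $R$ and is again a commutative noetherian ring; in particular $R$ acts centrally on $\complete{R}$. Flatness is faithful precisely because $I \subseteq J(R)$: every maximal ideal $\mathfrak{m}$ of $R$ contains $I$, so $\complete{R}/\mathfrak{m}\complete{R} \cong R/\mathfrak{m} \neq 0$, and no maximal ideal of $R$ extends to the unit ideal of $\complete{R}$. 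Hence $\phi$ is faithfully flat and all the hypotheses of Corollary~\ref{levelFaithfullyFlat} hold with $S = \complete{R}$.

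The second step is to identify the derived base change functor $\complete{R} \Lotimes_R -$ with completion on objects of $\dfbcat{R}$. Because $\complete{R}$ is flat, $\complete{R} \Lotimes_R -$ is computed by the ordinary tensor product $\complete{R} \otimes_R -$ with no higher Tor terms; and for a finitely generated $R$-module $M$ over a noetherian ring the canonical map $\complete{R} \otimes_R M \to \complete{M}$ is an isomorphism. Replacing $X$ (resp.\ $G$) by a bounded-above complex of finitely generated free $R$-modules quasi-isomorphic to it, one obtains $\complete{R} \Lotimes_R X \simeq \complete{X}$ and $\complete{R} \Lotimes_R G \simeq \complete{G}$.

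Combining the two steps, Corollary~\ref{levelFaithfullyFlat} yields at once
\[
\level_R^G(X) = \level_{\complete{R}}^{\complete{R} \Lotimes_R G}(\complete{R} \Lotimes_R X) = \level_{\complete{R}}^{\complete{G}}(\complete{X})\,,
\]
which is the assertion. There is no real obstacle here; the only points that need care are the faithful flatness of $\complete{R}$ (where the hypothesis $I \subseteq J(R)$ is used) and the compatibility of derived base change with completion on $\dfbcat{R}$, and both are standard facts about completions of noetherian rings.
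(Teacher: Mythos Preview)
Your proof is correct and follows exactly the intended route: the paper marks this corollary with a bare \qed\ because it is an immediate application of Corollary~\ref{levelFaithfullyFlat} to the completion map, and you have spelled out precisely the two standard facts needed (faithful flatness of $R \to \complete{R}$ when $I \subseteq J(R)$, and the identification $\complete{R} \Lotimes_R X \simeq \complete{X}$ for $X \in \dfbcat{R}$).
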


\section{A local to global principle}\label{sec:localGlobal}

In this section, we investigate the behavior of level and finite generation in the derived category of a commutative noetherian ring under localization at prime ideals. 

\subsection*{Localization} Let $R$ be a commutative noetherian ring and $\mathfrak{p}$ a prime ideal of $R$. For any left $R$-module $M$ one has $M_\mathfrak{p} \cong R_\mathfrak{p} \otimes_R M$ as a left module over $R_\mathfrak{p}$. The ring map $R \to R_\mathfrak{p}$ is flat, but need not be faithful.

An $R$-module $M$ is zero if and only if $M_\mathfrak{m}$ is zero for all maximal ideals. Thus a map of $R$-modules $f$ is zero if and only if $f_\mathfrak{m} = 0$ for all maximal ideals $\mathfrak{m}$. The same holds for maps in the derived category of complexes with finitely generated total homology:

\begin{lemma} \label{localGlobalFaithful}
Let $f\colon X \to Y$ be a morphism in $\dfbcat{R}$. Then the following conditions are equivalent
\begin{enumerate}
\item\label{localGlobalFaithful:global} $f = 0$ in $\dcat{R}$, 
\item\label{localGlobalFaithful:primes} $f_\mathfrak{p} = 0$ in $\dcat{R_\mathfrak{p}}$ for all $\mathfrak{p} \in \Spec(R)$, and
\item\label{localGlobalFaithful:max} $f_\mathfrak{m} = 0$ in $\dcat{R_\mathfrak{m}}$ for all $\mathfrak{m} \in \Max(R)$.
\end{enumerate}
\end{lemma}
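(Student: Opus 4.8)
The plan is to prove the cycle $(1)\Rightarrow(2)\Rightarrow(3)\Rightarrow(1)$, where only the last implication requires any work; the first two are formal. For $(1)\Rightarrow(2)$: the localization functor $R_\mathfrak{p}\otimes_R-\colon\dcat{R}\to\dcat{R_\mathfrak{p}}$ is exact (indeed $R\to R_\mathfrak{p}$ is flat), so it sends the zero morphism to the zero morphism, giving $f_\mathfrak{p}=0$. For $(2)\Rightarrow(3)$: maximal ideals are prime, so this is immediate by restricting the hypothesis to $\mathfrak{m}\in\Max(R)$.

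The substance is $(3)\Rightarrow(1)$. The idea is to reduce the vanishing of a morphism in $\dfbcat{R}$ to the vanishing of a map of modules, where the classical local-to-global statement for modules applies. Since $X,Y$ lie in $\dfbcat{R}$, the morphism $f$ corresponds to a class in $\Hom_{\dcat{R}}(X,Y)$, and I would like to detect this class via homology or Ext. Concretely, I would pick a bounded-below complex of finitely generated projectives $P\xrightarrow{\sim}X$ and a bounded-below complex of injectives $I\xrightarrow{\sim}Y$ (or use the quasi-isomorphism invariance of $\RHom$), so that $\Hom_{\dcat{R}}(X,Y)\cong H_0\RHom_R(X,Y)$; the point is that $\RHom_R(X,Y)$ has finitely generated homology when $X$ is represented by a bounded-below complex of finitely generated projectives and $Y$ by a bounded complex with finitely generated homology. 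Localization commutes with $\RHom$ for such complexes — $R_\mathfrak{p}\otimes_R\RHom_R(X,Y)\simeq\RHom_{R_\mathfrak{p}}(X_\mathfrak{p},Y_\mathfrak{p})$ because $X$ is built from finitely generated projectives over a noetherian ring — so that $(H_0\RHom_R(X,Y))_\mathfrak{p}\cong H_0\RHom_{R_\mathfrak{p}}(X_\mathfrak{p},Y_\mathfrak{p})$ and under this isomorphism the class of $f$ maps to the class of $f_\mathfrak{p}$.

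Now $H_0\RHom_R(X,Y)$ is a finitely generated $R$-module, and by hypothesis its localization at every maximal ideal vanishes (since $f_\mathfrak{m}=0$ means the class of $f$ dies in $H_0\RHom_{R_\mathfrak{m}}(X_\mathfrak{m},Y_\mathfrak{m})$, which is the localization of the element $[f]$). Wait — one subtlety: I only know $f_\mathfrak{m}=0$, i.e.\ the single element $[f]$ localizes to zero at each $\mathfrak{m}$, not that the whole module localizes to zero; but that is exactly enough: an element $m$ of a finitely generated $R$-module with $m/1=0$ in $M_\mathfrak{m}$ for all maximal $\mathfrak{m}$ is zero, because its annihilator is then contained in no maximal ideal, hence is the unit ideal. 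Applying this with $m=[f]\in H_0\RHom_R(X,Y)$ gives $[f]=0$, i.e.\ $f=0$ in $\dcat{R}$.

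The main obstacle I anticipate is the technical verification that localization commutes with $\RHom$ in the relevant generality and that the resulting isomorphism identifies $[f]_\mathfrak{p}$ with $[f_\mathfrak{p}]$ — this is standard for $X$ perfect or $X$ a bounded complex of finitely generated projectives over a noetherian ring (so one truncates a projective resolution appropriately, or invokes that $\RHom_R(X,-)$ can be computed using a bounded-below complex of finitely generated projectives since $X\in\dfbcat{R}$), but it needs to be stated carefully. An alternative route that sidesteps $\RHom$ entirely: cover $f$ by an honest chain map between suitable representatives, observe that being null-homotopic is equivalent to the vanishing of a morphism in the homotopy category, and then use that for bounded-below complexes of finitely generated modules over a noetherian ring, homotopy classes of chain maps form a finitely generated module functorially compatible with localization; the same annihilator argument then finishes. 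I would present whichever of these is cleanest, most likely the $\RHom$ version phrased via $H_0\RHom_R(X,Y)\cong\Hom_{\dcat{R}}(X,Y)$.
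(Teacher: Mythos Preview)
Your proposal is correct and takes essentially the same approach as the paper: both prove the cycle $(1)\Rightarrow(2)\Rightarrow(3)\Rightarrow(1)$, reduce the nontrivial implication $(3)\Rightarrow(1)$ to the isomorphism $\Hom_{\dcat{R}}(X,Y)_\mathfrak{m}\cong\Hom_{\dcat{R_\mathfrak{m}}}(X_\mathfrak{m},Y_\mathfrak{m})$ carrying $[f]_\mathfrak{m}$ to $[f_\mathfrak{m}]$, and then invoke the local-to-global principle for elements of an $R$-module. The paper simply asserts this isomorphism from $X,Y\in\dfbcat{R}$, whereas you spell out why it holds via a bounded-below projective resolution of $X$; note also that your annihilator argument works for any $R$-module, so the finite generation of $H_0\RHom_R(X,Y)$, while true, is not actually needed.
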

\begin{proof}
\eqref{localGlobalFaithful:global} $\implies$ \eqref{localGlobalFaithful:primes} and \eqref{localGlobalFaithful:primes} $\implies$ \eqref{localGlobalFaithful:max} are obvious. 

For \eqref{localGlobalFaithful:max} $\implies$ \eqref{localGlobalFaithful:global}: Since $X$ and $Y$ lie in $\dfbcat{R}$, we have
\[
\Hom_{\dcat{R}}(X,Y)_\mathfrak{m} = \Hom_{\dcat{R_\mathfrak{m}}}(X_\mathfrak{m},Y_\mathfrak{m})\,.
\]
Now $[f_\mathfrak{m}] = [f]_\mathfrak{m} = 0$ for all maximal ideals $\mathfrak{m}$ if and only if $[f] = 0$. 
\end{proof}

\begin{lemma} \label{mapZeroOpen}
Given the map $f\colon X \to Y$ in $\dfbcat{R}$ the subset
\[
\set{\mathfrak{p} \in \Spec(R)}{f_\mathfrak{p} = 0 \text{ in } \dcat{R_\mathfrak{p}}}
\]
of $\Spec(R)$ is open in the Zariski topology. 
\end{lemma}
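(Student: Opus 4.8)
The plan is to show the complement is closed, equivalently that the set $U \colonequals \set{\mathfrak{p}}{f_\mathfrak{p} = 0}$ is a union of basic open sets $D(r) = \Spec(R) \setminus V(r)$. Fix $\mathfrak{p} \in U$; I want to produce an element $r \in R \setminus \mathfrak{p}$ with $D(r) \subseteq U$, i.e.\ $f_\mathfrak{q} = 0$ for every $\mathfrak{q}$ not containing $r$, which is the same as $f_r = 0$ in $\dcat{R_r}$ after localizing at the multiplicative set $\{1,r,r^2,\dots\}$ (localization at $\mathfrak{q}$ factors through $R_r$).

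First I would reduce to a statement about a single homomorphism group. By Lemma~\ref{localGlobalFaithful} applied over the ring $R_r$ (or directly), $f_\mathfrak{p} = 0$ in $\dcat{R_\mathfrak{p}}$ means $[f]_\mathfrak{p} = 0$ in $\Hom_{\dcat{R}}(X,Y)_\mathfrak{p}$, using the identification $\Hom_{\dcat{R}}(X,Y)_\mathfrak{p} \cong \Hom_{\dcat{R_\mathfrak{p}}}(X_\mathfrak{p},Y_\mathfrak{p})$ valid because $X,Y \in \dfbcat{R}$. Here is the key finiteness input: since $X$ and $Y$ have bounded, degreewise finitely generated homology, the $R$-module $\Hom_{\dcat{R}}(X,Y)$ is finitely generated. (One sees this by replacing $X$ by a bounded-above complex of finitely generated projectives and $Y$ by a bounded-below complex whose terms can be taken finitely generated over a truncation, so that $\Hom_{\dcat{R}}(X,Y)$ is a subquotient of a finite direct sum of copies of terms of $Y$; alternatively cite the identification of this Hom with a finitely generated module via $\dfbcat{R} \simeq \kbfcat{\mathrm{proj}\,R}$ when $R$ is nice, but the elementary argument suffices in general.)

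With $M \colonequals \Hom_{\dcat{R}}(X,Y)$ a finitely generated $R$-module and $m \colonequals [f] \in M$, the hypothesis says $m$ maps to $0$ in $M_\mathfrak{p}$, so there exists $r \in R \setminus \mathfrak{p}$ with $rm = 0$ in $M$. Then for any prime $\mathfrak{q} \in D(r)$, the element $r$ is a unit in $R_\mathfrak{q}$, so $m$ maps to $0$ in $M_\mathfrak{q} = \Hom_{\dcat{R_\mathfrak{q}}}(X_\mathfrak{q}, Y_\mathfrak{q})$, i.e.\ $f_\mathfrak{q} = 0$ in $\dcat{R_\mathfrak{q}}$. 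Hence $D(r) \subseteq U$ and $\mathfrak{p} \in D(r)$, which proves $U$ is open.

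The only genuine obstacle is the finite generation of $\Hom_{\dcat{R}}(X,Y)$; once that is in hand the argument is the standard "finitely generated module vanishing at a prime vanishes on an open set" fact. I expect to spend a line or two justifying that finiteness (it follows from $X$ admitting a resolution by finitely generated projectives in each degree, bounded above, together with boundedness of $\coh(Y)$, since then $\RHom_R(X,Y)$ has bounded, degreewise finitely generated homology), and the rest is immediate.
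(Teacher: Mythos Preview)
Your argument is correct and a touch more direct than the paper's. The paper completes $f$ to a triangle $X \xrightarrow{f} Y \xrightarrow{g} Z \to \susp X$, observes that $f_\mathfrak{p} = 0$ if and only if $\ker(g_*)_\mathfrak{p} = 0$ for $g_* \colon \Hom_{\dcat{R}}(X,Y) \to \Hom_{\dcat{R}}(X,Z)$, and then uses that $\ker(g_*)$ is finitely generated so its support is closed. You bypass the cone and track the single element $[f] \in \Hom_{\dcat{R}}(X,Y)$; both routes rest on the same compatibility $\Hom_{\dcat{R}}(X,Y)_\mathfrak{p} \cong \Hom_{\dcat{R_\mathfrak{p}}}(X_\mathfrak{p},Y_\mathfrak{p})$ already used in the proof of Lemma~\ref{localGlobalFaithful}. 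One remark: the finite generation of $\Hom_{\dcat{R}}(X,Y)$ that you flag as the ``only genuine obstacle'' is not actually needed for your own argument---the implication ``$m_\mathfrak{p} = 0 \Rightarrow rm = 0$ for some $r \notin \mathfrak{p}$'' holds for an element of \emph{any} $R$-module, and the cyclic submodule $R\cdot[f]$ is finitely generated regardless---so that justification can simply be dropped.
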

\begin{proof}
The map $f$ fits in an exact triangle
\[
X \xrightarrow{f} Y \xrightarrow{g} Z \to \susp X\,.
\]
Applying $\Hom_{\dcat{R}}(X,-)$ to this exact triangle gives the long exact sequence
\[
\dots \to \Hom_{\dcat{R}}(X,X) \xrightarrow{f_*} \Hom_{\dcat{R}}(X,Y) \xrightarrow{g_*} \Hom_{\dcat{R}}(X,Z) \to \cdots\,.
\]
Then $f = 0$ if and only if $\ker(g_*) = 0$. Since $X$ and $Y$ are in $\dfbcat{R}$, one has $\ker(g_*)_\mathfrak{p} = \ker((g_\mathfrak{p})_*)$. Since $\ker(g_*)$ is finitely generated, the set
\[
\set{\mathfrak{p} \in \Spec(R)}{f_\mathfrak{p} = 0 \text{ in } \dcat{R_\mathfrak{p}}} = \set{\mathfrak{p} \in \Spec(R)}{\ker(g_*)_\mathfrak{p} = 0}
\]
is open in the Zariski topology. 
\end{proof}

\begin{lemma} \label{localGlobalCoginUp}
Fix $G$ in $\dfbcat{R}$. Then for any $X \in \dfbcat{R}$, one has
\begin{align*}
\cogin_{\dfbcat{R}}^G(X) \leq& \sup\set{\cogin_{\dfbcat{R_\mathfrak{m}}}^{G_\mathfrak{m}}(X_\mathfrak{m})}{\mathfrak{m} \in \Max(R)} \\
\leq& \sup\set{\cogin_{\dfbcat{R_\mathfrak{p}}}^{G_\mathfrak{p}}(X_\mathfrak{p})}{\mathfrak{p} \in \Spec(R)}\,.
\end{align*}
\end{lemma}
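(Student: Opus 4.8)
The plan is to reduce the first inequality to a statement about a single composition of coghost maps, using the converse coghost lemma and Lemma~\ref{localGlobalFaithful}. Set $n \colonequals \sup\set{\cogin_{\dfbcat{R_\mathfrak{m}}}^{G_\mathfrak{m}}(X_\mathfrak{m})}{\mathfrak{m} \in \Max(R)}$; if $n = \infty$ there is nothing to prove, so assume $n < \infty$. To show $\cogin_{\dfbcat{R}}^G(X) \leq n$, take any $n$-fold $G$-coghost map $f\colon Z \to X$ in $\dfbcat{R}$, written as $f = f^1 \circ \cdots \circ f^n$ with each $f^i\colon Z^i \to Z^{i-1}$ a $G$-coghost map and $Z^0 = X$, $Z^n = Z$. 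We must show $f = 0$ in $\dcat{R}$.

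The key observation is that localization at a prime $\mathfrak{p}$ is flat with $R$ acting centrally on $R_\mathfrak{p}$, so by \ref{faithfullyFlatCoghost} each $(f^i)_\mathfrak{p}$ is a $G_\mathfrak{p}$-coghost map in $\dfbcat{R_\mathfrak{p}}$. Hence $f_\mathfrak{p} = (f^1)_\mathfrak{p} \circ \cdots \circ (f^n)_\mathfrak{p}$ is an $n$-fold $G_\mathfrak{p}$-coghost map in $\dfbcat{R_\mathfrak{p}}$ for every $\mathfrak{p}$. In particular, for every maximal ideal $\mathfrak{m}$ we have $\cogin_{\dfbcat{R_\mathfrak{m}}}^{G_\mathfrak{m}}(X_\mathfrak{m}) \leq n$, so $f_\mathfrak{m} = 0$ in $\dcat{R_\mathfrak{m}}$. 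By Lemma~\ref{localGlobalFaithful} this forces $f = 0$ in $\dcat{R}$, which proves the first inequality. The second inequality is immediate: for a maximal ideal $\mathfrak{m}$, localizing further at a prime $\mathfrak{q} \subseteq \mathfrak{m}$ is again a flat map with central action, so by the same argument applied to $R_\mathfrak{m} \to R_\mathfrak{q} = (R_\mathfrak{m})_{\mathfrak{q} R_\mathfrak{m}}$ we get $\cogin_{\dfbcat{R_\mathfrak{m}}}^{G_\mathfrak{m}}(X_\mathfrak{m}) \leq \cogin_{\dfbcat{R_\mathfrak{q}}}^{G_\mathfrak{q}}(X_\mathfrak{q})$, and the supremum over all primes $\mathfrak{q}$ dominates the supremum over the maximal ideals alone. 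Alternatively one simply notes $\Max(R) \subseteq \Spec(R)$ and that each term on the left appears among the terms on the right.

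The only point that needs a little care is whether $\dfbcat{R_\mathfrak{p}}$ is the right ambient category for the localized composition: each $Z^i$ lies in $\dfbcat{R}$, and since localization preserves boundedness and degreewise finite generation of homology (indeed $\coh_j(Z^i_\mathfrak{p}) \cong \coh_j(Z^i)_\mathfrak{p}$), each $Z^i_\mathfrak{p}$ lies in $\dfbcat{R_\mathfrak{p}}$, so the localized maps do assemble into an $n$-fold $G_\mathfrak{p}$-coghost map in $\dfbcat{R_\mathfrak{p}}$ as required. I do not anticipate a genuine obstacle here; the proof is essentially a packaging of \ref{faithfullyFlatCoghost} and Lemma~\ref{localGlobalFaithful}, with the mild subtlety being to remember that one must pass to \emph{maximal} ideals (not arbitrary primes) before invoking Lemma~\ref{localGlobalFaithful}, since that is where the faithfulness of the combined localization family lives.
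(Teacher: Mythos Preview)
Your argument is correct and follows the same route as the paper: localize an $n$-fold $G$-coghost map using \ref{faithfullyFlatCoghost}, conclude $f_\mathfrak{m}=0$ from the definition of coghost index, and then apply Lemma~\ref{localGlobalFaithful}; the second inequality is immediate since $\Max(R)\subseteq\Spec(R)$. The only quibble is that your opening sentence mentions the converse coghost lemma, which you do not (and need not) use here.
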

\begin{proof}
Given an $n$-fold $G$-coghost map $f$. Then $f_\mathfrak{m}$ is an $n$-fold $G_\mathfrak{m}$-coghost map by \ref{faithfullyFlatCoghost}. If $f_\mathfrak{m} = 0$ for all maximal ideals $\mathfrak{m}$, then $f = 0$ by Lemma~\ref{localGlobalFaithful}. This proves the first inequality. The second is obvious. 
\end{proof}

\subsection*{Local to global principle} Using the preceding lemma, the following is a consequence of the converse coghost lemma~\ref{converseCoghostLemma}. 

\begin{corollary} \label{levelLocalGlobal}
Let $R$ be a commutative noetherian ring. Fix $G$ and $X$ in $\dfbcat{R}$. Then
\begin{align*}
\level_R^G(X) =& \sup\set{\level_{R_\mathfrak{p}}^{G_\mathfrak{p}}(X_\mathfrak{p})}{\mathfrak{p} \in \Spec(R)} \\
=& \sup\set{\level_{R_\mathfrak{m}}^{G_\mathfrak{m}}(X_\mathfrak{m})}{\mathfrak{m} \in \Max(R)}\,.
\end{align*}
\end{corollary}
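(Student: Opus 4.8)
The plan is to establish the two inequalities $\level_R^G(X) \le \sup_\mathfrak{p} \level_{R_\mathfrak{p}}^{G_\mathfrak{p}}(X_\mathfrak{p})$ and $\level_R^G(X) \ge \sup_\mathfrak{p} \level_{R_\mathfrak{p}}^{G_\mathfrak{p}}(X_\mathfrak{p})$ separately, and then note that the same argument works with $\Max(R)$ in place of $\Spec(R)$ since Lemma~\ref{localGlobalCoginUp} already handles maximal ideals.

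For the inequality $\le$, I would chain together the converse coghost lemma~\ref{converseCoghostLemma}, Lemma~\ref{localGlobalCoginUp}, and the ordinary coghost lemma~\eqref{eq:coghostLemma}. Explicitly:
\[
\level_R^G(X) = \cogin_{\dfbcat{R}}^G(X) \le \sup\set{\cogin_{\dfbcat{R_\mathfrak{p}}}^{G_\mathfrak{p}}(X_\mathfrak{p})}{\mathfrak{p} \in \Spec(R)} \le \sup\set{\level_{R_\mathfrak{p}}^{G_\mathfrak{p}}(X_\mathfrak{p})}{\mathfrak{p} \in \Spec(R)}\,.
\]
The first equality is the converse coghost lemma applied over $R$; it is legitimate because $R$ is commutative noetherian and $G, X \in \dfbcat{R}$. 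The middle inequality is exactly Lemma~\ref{localGlobalCoginUp} (the $\Spec$ version), and the last inequality is the coghost lemma applied over each $R_\mathfrak{p}$. Note that this direction does not need the converse coghost lemma to hold over the localizations, only over $R$ itself.

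For the reverse inequality $\ge$, the point is that level does not increase under the exact functor $R_\mathfrak{p} \Lotimes_R -\colon \dfbcat{R} \to \dfbcat{R_\mathfrak{p}}$, which sends $G$ to $G_\mathfrak{p}$ and $X$ to $X_\mathfrak{p}$. This is the inequality~\eqref{eq:levelExactFunctor} applied to $\mathsf{f} = R_\mathfrak{p} \Lotimes_R -$, giving $\level_R^G(X) \ge \level_{R_\mathfrak{p}}^{G_\mathfrak{p}}(X_\mathfrak{p})$ for each prime $\mathfrak{p}$; taking the supremum over $\mathfrak{p}$ yields the claim. Here one only needs that $R \to R_\mathfrak{p}$ is flat so that the base change restricts to a functor on $\dfbcat{R}$, which was already observed after~\eqref{eq:adjunction}.

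There is no serious obstacle here: the corollary is assembled entirely from results already in place, the main content being the converse coghost lemma of Oppermann and {\v{S}}\v{t}ov{\'{\i}}{\v{c}}ek (used only over the global ring $R$) together with Lemma~\ref{localGlobalCoginUp}. The only point requiring a moment's care is bookkeeping the direction of each inequality --- level goes down under base change while coghost index goes up --- so that the chain closes up into an equality. The $\Max(R)$ statement then follows because both bounding arguments (Lemma~\ref{localGlobalCoginUp} for the upper bound, and~\eqref{eq:levelExactFunctor} applied at maximal ideals for the lower bound) already work verbatim with $\Max(R)$.
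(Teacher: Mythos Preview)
Your proposal is correct and follows essentially the same route as the paper's proof: the inequality $\le$ is obtained by chaining the converse coghost lemma~\ref{converseCoghostLemma} over $R$ with Lemma~\ref{localGlobalCoginUp}, and the inequality $\ge$ comes from~\eqref{eq:levelExactFunctor} applied to localization. The only cosmetic difference is that you close the chain using the ordinary coghost lemma~\eqref{eq:coghostLemma} over each $R_\mathfrak{p}$, whereas the paper invokes the converse coghost lemma again over $R_\mathfrak{m}$; your version is marginally more economical (as you note, the converse coghost lemma is needed only over $R$), but the argument is otherwise identical.
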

\begin{proof}
Given a prime ideal $\mathfrak{p}$, there exists a maximal ideal $\mathfrak{m} \supset \mathfrak{p}$ and, by \eqref{eq:levelExactFunctor},
\[
\level_R^G(X) \geq \level_{R_\mathfrak{m}}^{G_\mathfrak{m}}(X_\mathfrak{m}) \geq \level_{R_\mathfrak{p}}^{G_\mathfrak{p}}(X_\mathfrak{p})\,.
\]
So it is enough to show the claim for all maximal ideals. By the converse coghost lemma~\ref{converseCoghostLemma} and Lemma~\ref{localGlobalCoginUp} one has
\begin{align*}
\level_R^G(X) =& \cogin_{\dfbcat{R}}^G(X) \\
\leq & \sup\set{\cogin_{\dfbcat{R_\mathfrak{m}}}^{G_\mathfrak{m}}(X_\mathfrak{m})}{\mathfrak{m} \in \Max(R)} \\
=& \sup\set{\level_{R_\mathfrak{m}}^{G_\mathfrak{m}}(X_\mathfrak{m})}{\mathfrak{m} \in \Max(R)}\,.
\end{align*}
For the opposite inequality, 
\[
\level_R^G(X) \geq \level_{R_\mathfrak{m}}^{G_\mathfrak{m}}(X_\mathfrak{m})
\]
holds for all maximal ideals $\mathfrak{m} \in \Max(R)$, by \eqref{eq:levelExactFunctor}.
\end{proof}

In \cite[Lemma~4.5]{Bass/Murthy:1967} it is proved that a module $M$ has finite projective dimension if and only if $M_\mathfrak{p}$ has finite projective dimension for all prime ideals $\mathfrak{p}$. This was extended to perfect complexes by \cite[Theorem~4.1]{Avramov/Iyengar/Lipman:2010}. Next we generalize these results to $\level$ with respect to any $G$. This extends Corollary~\ref{levelLocalGlobal}, in that it is not only possible to compute level locally, but also to check finiteness of level locally. For example, when a rings has infinitely many maximal ideals this is not a consequence of Corollary~\ref{levelLocalGlobal}. 

\begin{proposition} \label{primesLevelOpen}
Let $R$ be a commutative noetherian ring. Suppose $G$ and $X$ are objects in $\dfbcat{R}$. Then for any integer $n$ the set
\[
\mathcal{V}_n \colonequals \set{\mathfrak{p} \in \Spec(R)}{\level_{R_\mathfrak{p}}^{G_\mathfrak{p}}(X_\mathfrak{p}) \leq n} \subset \Spec(R)
\]
is Zariski open.
\end{proposition}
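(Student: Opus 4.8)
The plan is to show that $\mathcal{V}_n$ contains a basic open neighbourhood $D(s) \colonequals \set{\mathfrak{q} \in \Spec(R)}{s \notin \mathfrak{q}}$ of each of its points. For this it suffices, given $\mathfrak{p} \in \mathcal{V}_n$, to exhibit $s \in R \setminus \mathfrak{p}$ with $\level_{R_s}^{G_s}(X_s) \leq n$: then for every $\mathfrak{q} \in D(s)$ the exact functor $R_\mathfrak{q} \Lotimes_{R_s} - \colon \dcat{R_s} \to \dcat{R_\mathfrak{q}}$ sends $X_s \in \thick_{R_s}^n(G_s)$ to $X_\mathfrak{q} \in \thick_{R_\mathfrak{q}}^n(G_\mathfrak{q})$ (cf.\ \eqref{eq:levelExactFunctor}), so $D(s) \subseteq \mathcal{V}_n$. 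Thus the real content is a spreading-out statement: the bound $\level_{R_\mathfrak{p}}^{G_\mathfrak{p}}(X_\mathfrak{p}) \leq n$ propagates to a Zariski neighbourhood of $\mathfrak{p}$.

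So fix $\mathfrak{p} \in \mathcal{V}_n$. Unwinding the definition of the thickenings — concretely, using the description of $\thick_{R_\mathfrak{p}}^n(G_\mathfrak{p})$ as the direct summands of objects admitting an $n$-step tower of exact triangles with successive cones in $\add(G_\mathfrak{p})$, which follows from associativity of $\star$ together with $\smd(\cat{A}) \star \smd(\cat{B}) \subseteq \smd(\cat{A} \star \cat{B})$ — the hypothesis $X_\mathfrak{p} \in \thick_{R_\mathfrak{p}}^n(G_\mathfrak{p})$ produces finitely many pieces of data over $R_\mathfrak{p}$: an object $W$ of $\dfbcat{R_\mathfrak{p}}$, a tower $0 = W_0 \to W_1 \to \cdots \to W_n = W$ of exact triangles in $\dcat{R_\mathfrak{p}}$ whose cones $D_i$ lie in $\add(G_\mathfrak{p})$, and morphisms $\iota \colon X_\mathfrak{p} \to W$, $\pi \colon W \to X_\mathfrak{p}$ with $\pi\iota = \id_{X_\mathfrak{p}}$.

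The main step is to descend this along $R \to R_\mathfrak{p}$. The objects $X_\mathfrak{p}$ and $D_i$ are already localizations at $\mathfrak{p}$ of objects of $\dfbcat{R}$ — namely of $X$ and of a finite direct sum $D_i'$ of suspensions of $G$ — so their base changes $X_s$, $(D_i')_s$ to any $R_s$ with $s \notin \mathfrak{p}$ localize back correctly at $\mathfrak{p}$. I would build the tower over $R_s$ inductively: given $W_{i-1}^s \in \thick_{R_s}^{i-1}(G_s)$ with $(W_{i-1}^s)_\mathfrak{p} \cong W_{i-1}$, the connecting morphism $D_i \to \susp W_{i-1}$ is an element of $\Hom_{\dcat{R_s}}\bigl((D_i')_s, \susp W_{i-1}^s\bigr)_\mathfrak{p}$ — here one uses that for objects of $\dfbcat{R_s}$ the $\Hom$-modules commute with localization at $\mathfrak{p}$, just as in the proof of Lemma~\ref{localGlobalFaithful} — so after enlarging $s$ it is, up to a unit of $R_s$, the localization of an actual morphism over $R_s$; since scaling a morphism in a triangle by a unit only passes to an isomorphic triangle, completing that morphism to a triangle gives $W_i^s$ with $(W_i^s)_\mathfrak{p} \cong W_i$ and $W_i^s \in \thick_{R_s}^{i-1}(G_s) \star \add(G_s) \subseteq \thick_{R_s}^i(G_s)$. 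After $n$ steps, $W_n^s \in \thick_{R_s}^n(G_s)$ with $(W_n^s)_\mathfrak{p} \cong W$. Finally $\iota$, $\pi$ are elements of $\Hom_{\dcat{R_s}}(X_s, W_n^s)_\mathfrak{p}$, $\Hom_{\dcat{R_s}}(W_n^s, X_s)_\mathfrak{p}$, and $\pi\iota = \id$ holds in $\Hom_{\dcat{R_s}}(X_s, X_s)_\mathfrak{p}$; clearing denominators and using that an element which vanishes in a localization at $\mathfrak{p}$ is killed by a single element outside $\mathfrak{p}$, a last enlargement of $s$ yields $\iota^s$, $\pi^s$ over $R_s$ with $\pi^s\iota^s = \id_{X_s}$. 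As $\dcat{R_s}$ is idempotent complete and $\thick_{R_s}^n(G_s)$ is closed under direct summands, $X_s \in \thick_{R_s}^n(G_s)$, i.e.\ $\level_{R_s}^{G_s}(X_s) \leq n$, as wanted.

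I expect the one real obstacle to be the bookkeeping in the third paragraph: checking that "up to a unit" is harmless for exact triangles, that the section–retraction relation $\pi\iota = \id$ survives localization, and that the finitely many successive enlargements of $s$ can be bundled into one element of $R \setminus \mathfrak{p}$ (a finite product of elements outside $\mathfrak{p}$ is again outside $\mathfrak{p}$). None of it is deep — the content is simply that a witness for $X_\mathfrak{p} \in \thick_{R_\mathfrak{p}}^n(G_\mathfrak{p})$ is a finite amount of data, and finite data over $R_\mathfrak{p}$ already lives over some $R_s$ with $s \notin \mathfrak{p}$.
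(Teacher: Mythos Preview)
Your argument is correct, and it is genuinely different from the paper's proof.

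The paper does not spread out a witness for $X_\mathfrak{p}\in\thick_{R_\mathfrak{p}}^n(G_\mathfrak{p})$.  Instead it invokes the Oppermann--{\v{S}}\v{t}ov{\'{\i}}{\v{c}}ek machinery recalled in \ref{covariantlyFiniteDplus}--\ref{coghostRestrict}: from the left $\Prod_+(G)$-approximations one obtains a single sequence of $G$-coghost maps $g^i\colon Y^i\to Y^{i-1}$ in $\dfbcat{R}$ with $Y^0=X$ and each $Y^i$ perfect, such that $\level_R^G(X)=\inf\{n: g^1\circ\cdots\circ g^n=0\}$.  The heart of the paper's proof is checking that this entire construction localizes (products in $\dfpcat{R}$ localize, approximations localize, the descent step \ref{coghostRestrict} localizes), so that $\mathcal{V}_n=\{\mathfrak{p}:(g^1\circ\cdots\circ g^n)_\mathfrak{p}=0\}$ is the vanishing locus of one fixed morphism, hence open by Lemma~\ref{mapZeroOpen}.

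Your route is more elementary: it uses nothing beyond the fact that $\Hom$ in $\dfbcat{R}$ commutes with localization and that finitely many elements of localizations at $\mathfrak{p}$ already live over some $R_s$.  In particular it bypasses the converse coghost lemma entirely, so it would go through in any setting where those basic finiteness facts hold.  What the paper's approach buys is uniformity: it produces a \emph{global} morphism in $\dfbcat{R}$ whose vanishing locus is exactly $\mathcal{V}_n$, which is conceptually cleaner and dovetails with the coghost theme of the paper.  For the downstream application (Theorem~\ref{localGlobalFiniteLevel}), either argument suffices, since only openness of $\mathcal{V}_n$ is used.

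Two minor remarks on your write-up.  First, the appeal to idempotent completeness of $\dcat{R_s}$ is unnecessary: once $\pi^s\iota^s=\id_{X_s}$, the triangle on $\iota^s$ splits, exhibiting $X_s$ as a summand of $W_n^s$ directly.  Second, the ``up to a unit'' clause can be absorbed into the enlargement of $s$: if $\delta=\tilde\delta/t$ with $t\notin\mathfrak{p}$, then over $R_{st}$ one has $(\tilde\delta/t)_\mathfrak{p}=\delta$ on the nose, so no unit correction is needed for the cone identification.
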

\begin{proof}
We use the intermediate results in the proof of the converse coghost lemma \cite[Theorem~24]{Oppermann/Stovicek:2012}, recalled in Section~\ref{sec:levelCoghost}. 

By \ref{covariantlyFiniteDplus}, there exists a sequence of $G$-coghost maps induced by the left $\Prod_+(G)$-approximations
\[
f^{i+1}\colon X^{i+1} \to X^i \quad \text{in }\dfpcat{R}
\]
with $X^0 = X$, such that
\[
\level_R^G(X) = \inf\set{n \geq 0}{(f^1 \circ \dots \circ f^n) = 0 \text{ in } \dcat{R}}\,.
\]
Then, by \ref{coghostRestrict}, there exist $G$-coghost maps $g^i\colon Y^i \to Y^{i-1}$ with $Y^0 = X$ and $Y^i$ perfect, such that $g^1 \circ \dots \circ g^n$ is zero if and only if $f^1 \circ \dots \circ f^n$ is zero. That gives
\[
\level_R^G(X) = \inf\set{n \geq 0}{(g^1 \circ \dots \circ g^n) = 0 \text{ in } \dcat{R}}\,.
\]

Now we want to show that the construction of the $g^i$'s from $X$ localizes. Specifically, we require that $\Prod_+(G)$, the left $\Prod_+(G)$-approximations and the descent from the $f^i$'s to the $g^i$'s in \ref{coghostRestrict} localize.

While products need not localize in general, the products in $\dfpcat{R}$ localize: If a product exists in $\dfpcat{R}$, then by \cite[Proposition~13]{Oppermann/Stovicek:2012} it is a componentwise product and one may assume that each component in the product is finite. Thus $\Prod_+(G_\mathfrak{p}) = \Prod_+(G)_\mathfrak{p}$. 

Let $m(Z) \colon Z \to H$ be a left $\Prod_+(G)$-approximation for some $Z \in \dfpcat{R}$. Completing this map to an exact triangle $W \to Z \to H \to \susp W$, yields a $G$-coghost map $h \colon W \to Z$. By \ref{faithfullyFlatCoghost}, its localization $h_\mathfrak{p}$ is $G_\mathfrak{p}$-ghost. Since $H_\mathfrak{p}$ lies in $\Prod_+(G_\mathfrak{p})$, the localization $m(Z_\mathfrak{p})$ is a left $\Prod_+(G_\mathfrak{p})$-approximation.

Lastly, the construction \ref{coghostRestrict} descends to the localization, so that $(f^1 \circ \cdots \circ f^n)_\mathfrak{p}$ is zero if and only if $(g^1 \circ \cdots \circ g^n)_\mathfrak{p}$ is zero. So
\[
\mathcal{V}_n = \set{\mathfrak{p} \in \Spec(R)}{(g^1 \circ \dots \circ g^n)_\mathfrak{p} = 0 \text{ in } \dcat{R_\mathfrak{p}}}
\]
is open by Lemma~\ref{mapZeroOpen}. 
\end{proof}

\begin{theorem} \label{localGlobalFiniteLevel}
Let $R$ be a commutative noetherian ring. For objects $G$ and $X$ in $\dfbcat{R}$, the following conditions are equivalent
\begin{enumerate}
\item\label{localGlobalFiniteLevel:global} $\level_R^G(X) < \infty$, 
\item\label{localGlobalFiniteLevel:primes} $\level_{R_\mathfrak{p}}^{G_\mathfrak{p}}(X_\mathfrak{p}) < \infty$ for all $\mathfrak{p} \in \Spec(R)$, and
\item\label{localGlobalFiniteLevel:max} $\level_{R_\mathfrak{m}}^{G_\mathfrak{m}}(X_\mathfrak{m}) < \infty$ for all $\mathfrak{m} \in \Max(R)$.
\end{enumerate}
\end{theorem}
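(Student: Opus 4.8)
The plan is to establish the cycle \eqref{localGlobalFiniteLevel:global} $\implies$ \eqref{localGlobalFiniteLevel:primes} $\implies$ \eqref{localGlobalFiniteLevel:max} $\implies$ \eqref{localGlobalFiniteLevel:global}. The implication \eqref{localGlobalFiniteLevel:global} $\implies$ \eqref{localGlobalFiniteLevel:primes} is immediate from \eqref{eq:levelExactFunctor}: for each prime $\mathfrak{p}$ the localization $R \to R_\mathfrak{p}$ induces an exact functor $\dcat{R} \to \dcat{R_\mathfrak{p}}$ carrying $G$ to $G_\mathfrak{p}$ and $X$ to $X_\mathfrak{p}$, whence $\level_{R_\mathfrak{p}}^{G_\mathfrak{p}}(X_\mathfrak{p}) \leq \level_R^G(X) < \infty$. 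The implication \eqref{localGlobalFiniteLevel:primes} $\implies$ \eqref{localGlobalFiniteLevel:max} is trivial, since maximal ideals are prime. So the real content is \eqref{localGlobalFiniteLevel:max} $\implies$ \eqref{localGlobalFiniteLevel:global}, and the essential input is Proposition~\ref{primesLevelOpen}.

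Assuming \eqref{localGlobalFiniteLevel:max}, I would first observe that since $\level_{R_\mathfrak{p}}^{G_\mathfrak{p}}(X_\mathfrak{p}) \leq n$ forces $\level_{R_\mathfrak{p}}^{G_\mathfrak{p}}(X_\mathfrak{p}) \leq n+1$, the sets $\mathcal{V}_n$ form an ascending chain $\mathcal{V}_0 \subseteq \mathcal{V}_1 \subseteq \cdots$; by Proposition~\ref{primesLevelOpen} each of them is Zariski open, hence so is their union $\mathcal{U} \colonequals \bigcup_{n \geq 0} \mathcal{V}_n$. Condition \eqref{localGlobalFiniteLevel:max} says precisely that $\mathcal{U}$ contains every maximal ideal of $R$. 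Next I would invoke the point-set fact that an open subset of $\Spec(R)$ containing all closed points is the whole space: the complement $\Spec(R) \setminus \mathcal{U}$ is closed, hence of the form $V(I)$, and were it nonempty we would have $I \neq R$, so $I$ would lie in some maximal ideal $\mathfrak{m} \in V(I)$ --- impossible since $\mathfrak{m} \in \mathcal{U}$. Hence $\mathcal{U} = \Spec(R)$. As $\Spec(R)$ is quasi-compact and the cover $\{\mathcal{V}_n\}_{n \geq 0}$ is ascending, there is an integer $N$ with $\Spec(R) = \mathcal{V}_N$, that is, $\level_{R_\mathfrak{p}}^{G_\mathfrak{p}}(X_\mathfrak{p}) \leq N$ for every $\mathfrak{p} \in \Spec(R)$. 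Then Corollary~\ref{levelLocalGlobal} yields $\level_R^G(X) = \sup\set{\level_{R_\mathfrak{p}}^{G_\mathfrak{p}}(X_\mathfrak{p})}{\mathfrak{p} \in \Spec(R)} \leq N < \infty$, which is \eqref{localGlobalFiniteLevel:global}.

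The main obstacle is not in the argument just sketched but in Proposition~\ref{primesLevelOpen}, which has to be in place first: one must check that the constructions underlying the converse coghost lemma~\ref{converseCoghostLemma} all commute with localization --- the subcategory $\Prod_+(G)$ (using that a product in $\dfpcat{R}$ is componentwise and may be taken with finite components), the left $\Prod_+(G)$-approximations, and the descent \ref{coghostRestrict} from the coghost maps $f^i$ to the perfect-object coghost maps $g^i$ --- so that $\mathcal{V}_n$ is cut out as the vanishing locus of a single composite map $g^1 \circ \dots \circ g^n$ and is therefore open by Lemma~\ref{mapZeroOpen}. Granting that, the theorem is merely a quasi-compactness repackaging of Corollary~\ref{levelLocalGlobal} together with the observation that an open subset of $\Spec(R)$ meeting no closed point is empty.
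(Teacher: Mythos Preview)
Your argument is correct and follows essentially the same route as the paper: both rely on Proposition~\ref{primesLevelOpen} to get the ascending open cover $\{\mathcal{V}_n\}$ and then invoke Corollary~\ref{levelLocalGlobal} once a single $\mathcal{V}_N$ exhausts $\Spec(R)$. The only cosmetic differences are that the paper passes through \eqref{localGlobalFiniteLevel:primes} (using that $\level_{R_\mathfrak{p}}^{G_\mathfrak{p}}(X_\mathfrak{p}) \leq \level_{R_\mathfrak{m}}^{G_\mathfrak{m}}(X_\mathfrak{m})$ for $\mathfrak{p} \subseteq \mathfrak{m}$) and then uses that $\Spec(R)$ is a noetherian space to stabilize the chain, whereas you go directly from \eqref{localGlobalFiniteLevel:max} via the closed-point argument and use quasi-compactness instead; both finish the job.
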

\begin{proof}
The implications \eqref{localGlobalFiniteLevel:global} $\implies$ \eqref{localGlobalFiniteLevel:primes} and \eqref{localGlobalFiniteLevel:primes} $\iff$ \eqref{localGlobalFiniteLevel:max} are clear. For \eqref{localGlobalFiniteLevel:primes} $\implies$ \eqref{localGlobalFiniteLevel:global}, assume $\level_{R_\mathfrak{p}}^{G_\mathfrak{p}}(X_\mathfrak{p})$ is finite for all prime ideals $\mathfrak{p} \in \Spec(R)$. That is, the union of all $\mathcal{V}_n$, using the notation of the proof of Proposition~\ref{primesLevelOpen}, is $\Spec(R)$. By Proposition~\ref{primesLevelOpen}, the $\mathcal{V}_n$'s form an ascending chain of open sets. Since $R$ is noetherian, the space $\Spec(R)$ is noetherian, and the chain stabilizes. So there exists an $N$, such that $\mathcal{V}_N = \Spec(R)$. Thus $\level_{R_\mathfrak{p}}^{G_\mathfrak{p}}(X_\mathfrak{p}) \leq N$ for all prime ideals $\mathfrak{p}$, and by Corollary~\ref{levelLocalGlobal}, $\level_R^G(X) < \infty$. 
\end{proof}

\begin{discussion} \label{NoetherAlgebra}
A ring $R$ is a \emph{Noether algebra} if its center $\ctr(R)$ is noetherian and $R$ is a finitely generated module over $\ctr(R)$. 

For example, given a finitely generated module $M$ over a commutative noetherian ring $A$, the endomorphism ring $\Hom_A(M,M)$ is a Noether algebra, where the image of $A$ lies in the center. 

Theorem~\ref{localGlobalFiniteLevel} and all the previous results in this section hold when $R$ is a Noether algebra, when one localizes with respect to the prime ideals of $\ctr(R)$. This is, because the converse coghost lemma by \cite[Theorem~24]{Oppermann/Stovicek:2012} holds for Noether algebras. 
\end{discussion}

\subsection*{Local to global principle for upper bounds} One way to think about level of $X$ with respect to $G$ is as the generation time for $X$ when using $G$ as a building block. It is interesting to know whether $G$ generates every object and if there is an upper bound for level of any objects in a given triangulated category. 

\begin{definition}
An object $G$ in $\cat{T}$ is a \emph{strong generator} of $\cat{T}$ if $\thick_\cat{T}^n(G) = \cat{T}$ for some $n$. The \emph{generation time} of $G$ is defined by
\[
\gentime_\cat{T}(G) = \inf\set{n \geq 0}{\thick_\cat{T}^{n+1}(G) = \cat{T}}\,.
\]
\end{definition}

The notation is adopted from \cite[Definition~2.1]{Ballard/Favero/Katzarkov:2012}. The generation time is shifted by one from level. That is
\[
\level_\cat{T}^G(X) \leq \gentime_\cat{T}(G) + 1\,.
\]
The convention for the generation time is so that it matches up with the Rouquier dimension; see \cite[Definition~3.2]{Rouquier:2008}. 

To detect whether an object is a strong generator locally, one has to be able to lift objects from the localizations. 

\begin{lemma} \label{localizationEssentiallySurj}
For any prime ideal $\mathfrak{p} \in \Spec(R)$, the functor $\dfbcat{R} \to \dfbcat{R_\mathfrak{p}}$ is essentially surjective, that is for every $X \in \dfbcat{R_\mathfrak{p}}$ there exists $Y \in \dfbcat{R}$ such that $Y_\mathfrak{p} \simeq X$. 
\end{lemma}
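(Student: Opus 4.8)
The plan is to replace $X$ by a bounded complex of finitely generated modules, descend that complex along $R \to R_\mathfrak p$ to the localization $R_s$ at a single element $s \notin \mathfrak p$, and then clear denominators to lift it all the way to $R$.

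First I would reduce to modules. As $R_\mathfrak p$ is noetherian, $\dfbcat{R_\mathfrak p}$ is the bounded derived category of finitely generated $R_\mathfrak p$-modules (resolve by finitely generated free modules and truncate, using that the homology of $X$ is bounded). So $X$ is isomorphic in $\dcat{R_\mathfrak p}$ to a bounded complex $M^\bullet = (M^a \to \cdots \to M^b)$ of finitely generated $R_\mathfrak p$-modules, and it suffices to produce a bounded complex $N^\bullet$ of finitely generated $R$-modules together with an isomorphism of complexes $(N^\bullet)_\mathfrak p \cong M^\bullet$.

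Next I would descend to a single localization. Write $R_\mathfrak p = \operatorname{colim}_{s \in R \setminus \mathfrak p} R_s$ as a filtered colimit of rings. The standard limit formalism for finitely presented modules --- finitely presented modules, maps between them, and equalities of such maps each descend to a finite stage --- applies to the finitely many $M^i$, the differentials of $M^\bullet$, and the relations $d^{i+1}d^i = 0$; hence for suitable $s \notin \mathfrak p$ there is a bounded complex $(L^\bullet,\partial^\bullet)$ of finitely generated $R_s$-modules with $L^\bullet \otimes_{R_s} R_\mathfrak p \cong M^\bullet$. Since $R \to R_s$ is a localization, each $L^i$ is isomorphic to $(N^i)_s$ for a finitely generated $R$-module $N^i$ (clear denominators in a presentation matrix), and $N^i$ is finitely presented, so $\Hom_{R_s}\bigl((N^i)_s,(N^{i+1})_s\bigr) \cong \Hom_R(N^i,N^{i+1}) \otimes_R R_s$. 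Thus $\partial^i = \phi^i/s^k$ for some $\phi^i \in \Hom_R(N^i,N^{i+1})$ with $k$ uniform in $i$, and $\partial^{i+1}\partial^i = 0$ gives $s^l\phi^{i+1}\phi^i = 0$ in $\Hom_R(N^i,N^{i+2})$ for some uniform $l$. Setting $\psi^i \colonequals s^l\phi^i$ one gets $\psi^{i+1}\psi^i = s^l(s^l\phi^{i+1}\phi^i) = 0$, so $N^\bullet \colonequals (N^a \xrightarrow{\psi^a} \cdots \xrightarrow{\psi^{b-1}} N^b)$ is an honest bounded complex of finitely generated $R$-modules, hence lies in $\dfbcat{R}$.

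Finally I would localize back. The complex $(N^\bullet)_s$ is $L^\bullet$ with each differential multiplied by the unit $u \colonequals s^{k+l} \in R_s^\times$, and scaling all differentials of a complex by one fixed unit $u$ yields an isomorphic complex via $\theta^i = u^{-i}\cdot\id$. Hence $(N^\bullet)_s \cong L^\bullet$, so $(N^\bullet)_\mathfrak p \cong L^\bullet \otimes_{R_s} R_\mathfrak p \cong M^\bullet \simeq X$, and $Y \colonequals N^\bullet$ does the job. The crux, I expect, is lifting the \emph{complex} structure rather than the individual modules: passing through a single $R_s$ is forced because infinitely many elements are inverted in $R_\mathfrak p$, and even over $R_s$ the relation $d^2 = 0$ survives the lift to $R$ only after the harmless rescaling of the differentials by a common power of $s$.
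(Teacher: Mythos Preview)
Your proof is correct and follows essentially the same approach as the paper's: lift each module to a finitely generated $R$-module, lift each differential up to a denominator in $R\setminus\mathfrak p$, and then rescale the differentials by these (unit-after-localizing) denominators to force $d^2=0$ while leaving the isomorphism class over $R_\mathfrak p$ unchanged. The only organizational difference is that you first package the finitely many denominators into a single $s$ and factor through $R_s$, whereas the paper works directly with $R\to R_\mathfrak p$ and handles the compositions one degree at a time; both arguments rest on the same identification $\Hom_R(M,N)\otimes_R R_\mathfrak p\cong\Hom_{R_\mathfrak p}(M_\mathfrak p,N_\mathfrak p)$ for finitely presented $M$ and the same unit-rescaling trick.
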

\begin{proof}
Every finitely generated module over $R_\mathfrak{p}$ can be lifted to a finitely generated module over $R$. For finitely generated $R$-modules $M$ and $N$ one has
\begin{equation*}
\Hom_R(M,N) \otimes_R R_\mathfrak{p} \cong \Hom_{R_\mathfrak{p}}(M_\mathfrak{p},N_\mathfrak{p})\,.
\end{equation*}
Thus for a map $f \colon M_\mathfrak{p} \to N_\mathfrak{p}$ there exists $r \in R \setminus \mathfrak{p}$ such that $r \cdot f = g_\mathfrak{p}$ where $g \colon M \to N$ a map over $R$. 

Let $X$ be a complex, and $X'$ the complex with the same modules, and differentials $r_i \cdot \partial^X_i$ where the $r_i$'s are units, and $r_i = 1$ for all but finitely many integers $i$. Then $X$ and $X'$ are quasi-isomorphic. 

Given a sequence
\[
X \xrightarrow{f} Y \xrightarrow{g} Z
\]
of finitely generated modules over $R_\mathfrak{p}$ with $g \circ f = 0$. Up to multiplication by elements in $R \setminus \mathfrak{p}$, this sequence lifts to a sequence
\[
\tilde{X} \xrightarrow{\tilde{f}} \tilde{Y} \xrightarrow{\tilde{g}} \tilde{Z}
\]
over $R$. It is not necessary that $\tilde{g} \circ \tilde{f} = 0$, but one has $(\tilde{g} \circ \tilde{f})_\mathfrak{p} = 0$. Since $X$ is finitely generated there exists $r \in R \setminus \mathfrak{p}$, such that $r \cdot (\tilde{g} \circ \tilde{f})(X) = 0$. Replacing $\tilde{g}$ by $r \tilde{g}$ gives a sequence whose composition is zero. Since $r$ is a unit in $R_\mathfrak{p}$ this sequence localizes to the original sequence. Thus inductively any bounded complex of finitely generated $R_\mathfrak{p}$-modules lifts to a complex of finitely generated $R$-modules. 
\end{proof}

It is possible to detect a strong generator locally, when the generation time has an upper bound locally. 

\begin{theorem} \label{localGlobalStrongGenerator}
Let $R$ be a commutative noetherian ring. Fix $G$ in $\dfbcat{R}$ and a positive integer $N$. Then the following are equivalent
\begin{enumerate}
\item\label{localGlobalStrongGenerator:global} $G$ is a strong generator of $\dfbcat{R}$ with $\gentime_{\dfbcat{R}}(G) \leq N$, 
\item\label{localGlobalStrongGenerator:primes} $G_\mathfrak{p}$ is a strong generator of $\dfbcat{R_\mathfrak{p}}$ with $\gentime_{\dfbcat{R_\mathfrak{p}}}(G_\mathfrak{p}) \leq N$ for all prime ideals $\mathfrak{p} \in \Spec(R)$, and
\item\label{localGlobalStrongGenerator:max} $G_\mathfrak{m}$ is a strong generator of $\dfbcat{R_\mathfrak{m}}$ with $\gentime_{\dfbcat{R_\mathfrak{m}}}(G_\mathfrak{m}) \leq N$ for all maximal ideals $\mathfrak{m} \in \Max(R)$.
\end{enumerate}
\end{theorem}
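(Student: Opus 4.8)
The plan is to read the three conditions as the single equalities $\thick_R^{N+1}(G) = \dfbcat{R}$, $\thick_{R_\mathfrak{p}}^{N+1}(G_\mathfrak{p}) = \dfbcat{R_\mathfrak{p}}$ for all $\mathfrak{p}$, and $\thick_{R_\mathfrak{m}}^{N+1}(G_\mathfrak{m}) = \dfbcat{R_\mathfrak{m}}$ for all $\mathfrak{m}$; equivalently, as the assertions that $\level_R^G(-) \leq N+1$, $\level_{R_\mathfrak{p}}^{G_\mathfrak{p}}(-) \leq N+1$, and $\level_{R_\mathfrak{m}}^{G_\mathfrak{m}}(-) \leq N+1$ on every object of the respective category. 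With this reformulation the implication $\eqref{localGlobalStrongGenerator:primes} \implies \eqref{localGlobalStrongGenerator:max}$ is immediate, since every maximal ideal is prime. So the content is in the two remaining implications, each of which I expect to be a short application of results already established.

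For $\eqref{localGlobalStrongGenerator:global} \implies \eqref{localGlobalStrongGenerator:primes}$, I would fix a prime $\mathfrak{p}$ and an \emph{arbitrary} $Z \in \dfbcat{R_\mathfrak{p}}$, use Lemma~\ref{localizationEssentiallySurj} to write $Z \simeq Y_\mathfrak{p}$ for some $Y \in \dfbcat{R}$, and then apply \eqref{eq:levelExactFunctor} to the localization functor $\dfbcat{R} \to \dfbcat{R_\mathfrak{p}}$ to get
\[
\level_{R_\mathfrak{p}}^{G_\mathfrak{p}}(Z) = \level_{R_\mathfrak{p}}^{G_\mathfrak{p}}(Y_\mathfrak{p}) \leq \level_R^G(Y) \leq N+1\,.
\]
As $Z$ ranges over $\dfbcat{R_\mathfrak{p}}$ this yields $\thick_{R_\mathfrak{p}}^{N+1}(G_\mathfrak{p}) = \dfbcat{R_\mathfrak{p}}$, which is $\eqref{localGlobalStrongGenerator:primes}$. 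The one thing to be careful about is that one must reach \emph{every} object of $\dfbcat{R_\mathfrak{p}}$, not only those of the form $X_\mathfrak{p}$ with $X$ coming from $R$; that is precisely what Lemma~\ref{localizationEssentiallySurj} provides.

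For $\eqref{localGlobalStrongGenerator:max} \implies \eqref{localGlobalStrongGenerator:global}$, I would fix an arbitrary $X \in \dfbcat{R}$; then $X_\mathfrak{m} \in \dfbcat{R_\mathfrak{m}}$ for each maximal ideal $\mathfrak{m}$, so the hypothesis $\gentime_{\dfbcat{R_\mathfrak{m}}}(G_\mathfrak{m}) \leq N$ gives $\level_{R_\mathfrak{m}}^{G_\mathfrak{m}}(X_\mathfrak{m}) \leq N+1$. Substituting into Corollary~\ref{levelLocalGlobal} gives
\[
\level_R^G(X) = \sup\set{\level_{R_\mathfrak{m}}^{G_\mathfrak{m}}(X_\mathfrak{m})}{\mathfrak{m} \in \Max(R)} \leq N+1\,,
\]
and since $X$ was arbitrary this is exactly $\thick_R^{N+1}(G) = \dfbcat{R}$, i.e.\ $G$ is a strong generator with $\gentime_{\dfbcat{R}}(G) \leq N$. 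I do not anticipate a genuine obstacle: the argument rests entirely on Corollary~\ref{levelLocalGlobal} (hence on the converse coghost lemma~\ref{converseCoghostLemma}) and Lemma~\ref{localizationEssentiallySurj}, both already available, so what remains is the bookkeeping above — in particular the observation that it is the \emph{uniform} bound $N$, not merely pointwise finiteness of level, that propagates, which is automatic from the supremum formula of Corollary~\ref{levelLocalGlobal}.
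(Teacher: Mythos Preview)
Your proposal is correct and matches the paper's proof essentially line for line: the paper also notes \eqref{localGlobalStrongGenerator:primes} $\implies$ \eqref{localGlobalStrongGenerator:max} is obvious, proves \eqref{localGlobalStrongGenerator:global} $\implies$ \eqref{localGlobalStrongGenerator:primes} by lifting via Lemma~\ref{localizationEssentiallySurj} and applying \eqref{eq:levelExactFunctor}, and proves \eqref{localGlobalStrongGenerator:max} $\implies$ \eqref{localGlobalStrongGenerator:global} by feeding the uniform bound into the supremum formula of Corollary~\ref{levelLocalGlobal}. Your emphasis on why essential surjectivity and the \emph{uniform} bound are needed is exactly the point the paper makes (the latter being reinforced by the Nagata example following the theorem).
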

\begin{proof}
\eqref{localGlobalStrongGenerator:primes} $\implies$ \eqref{localGlobalStrongGenerator:max} is obvious. For \eqref{localGlobalStrongGenerator:global} $\implies$ \eqref{localGlobalStrongGenerator:primes}: Given any $X$ in $\dfbcat{R_\mathfrak{p}}$, by Lemma~\ref{localizationEssentiallySurj}, there exists $Y$ in $\dfbcat{R}$ with $Y_\mathfrak{p} = X$. One has
\[
\level_{R_\mathfrak{p}}^{G_\mathfrak{p}}(X) \leq \level_R^G(Y) \leq \gentime_{\dfbcat{R}}(G) + 1 \leq N + 1\,.
\]
So $G_\mathfrak{p}$ is a strong generator of $\dfbcat{R_\mathfrak{p}}$ with generation time $\leq N$.

It remains to show \eqref{localGlobalStrongGenerator:max} $\implies$ \eqref{localGlobalStrongGenerator:global}. For any $X$ in $\dfbcat{R}$, we have, by Corollary~\ref{levelLocalGlobal},
\begin{align*}
\level_R^G(X) =& \sup\set{\level_{R_\mathfrak{m}}^{G_\mathfrak{m}}(X_\mathfrak{m})}{\mathfrak{m} \in \Max(R)} \\
\leq& \sup\set{\gentime_{\dfbcat{R_\mathfrak{m}}}(G_\mathfrak{m})}{\mathfrak{m} \in \Max(R)} + 1 \leq N+1\,,
\end{align*}
and so $G$ is a strong generator of $\dfbcat{R}$ with $\gentime_{\dfbcat{R}}(G) \leq N$. 
\end{proof}

This statement does not hold without a uniform bound on local generation time. 

\begin{example}
In \cite[Appendix~A1]{Nagata:1962} Nagata constructed a commutative noetherian ring $R$ of infinite Krull dimension, such that $R_\mathfrak{m}$ is regular and of finite Krull dimension for all maximal ideals $\mathfrak{m}$. So
\[
\gentime_{\dfbcat{R_\mathfrak{m}}}(R_\mathfrak{m}) = \gldim(R_\mathfrak{m}) = \dim(R_\mathfrak{m}) < \infty
\]
for any maximal ideal $\mathfrak{m}$. But
\[
\gentime_{\dfbcat{R}}(R) = \gldim(R) = \dim(R) = \infty\,.
\]
So $R_\mathfrak{m}$ is a strong generator of $\dfbcat{R_\mathfrak{m}}$ for all $\mathfrak{m}$, but $R$ is not a strong generator of $\dfbcat{R}$. 
\end{example}

\section{Applications}\label{sec:application}

Combining Corollaries~\ref{levelCompletion}, \ref{levelLocalGlobal} and Theorem~\ref{localGlobalFiniteLevel} we obtain:

\begin{corollary} \label{reductionCompleteLocal}
Given a commutative noetherian ring $R$, and $G$, $X$ in $\dfbcat{R}$, one has
\[
\level_R^G(X) = \sup\set{\level_{\complete{R_\mathfrak{m}}}^{\complete{G_\mathfrak{m}}}(\complete{X_\mathfrak{m}})}{\mathfrak{m} \in \Max(R)}
\]
and $\level_R^G(X) < \infty$ if and only if $\level_{\complete{R_\mathfrak{m}}}^{\complete{G_\mathfrak{m}}}(\complete{X_\mathfrak{m}}) < \infty$ for all maximal ideals $\mathfrak{m}$. Here $\complete{(-)}$ denotes the completion in $R_\mathfrak{m}$ with respect to its maximal ideal $\mathfrak{m} R_\mathfrak{m}$. 
\qed
\end{corollary}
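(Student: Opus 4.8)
The plan is to chain together the three results cited immediately before the statement, with no new ideas needed. First I would apply Corollary~\ref{levelLocalGlobal} to rewrite
\(\level_R^G(X)\) as the supremum over maximal ideals \(\mathfrak{m}\) of \(R\) of \(\level_{R_\mathfrak{m}}^{G_\mathfrak{m}}(X_\mathfrak{m})\). A small amount of bookkeeping is required here: each localization \(R_\mathfrak{m}\) is again commutative noetherian, and \(G_\mathfrak{m}\) and \(X_\mathfrak{m}\) lie in \(\dfbcat{R_\mathfrak{m}}\), because \(R \to R_\mathfrak{m}\) is flat, so \(\coh_i(X_\mathfrak{m}) \cong \coh_i(X)_\mathfrak{m}\) is finitely generated and vanishes for all but finitely many \(i\), and likewise for \(G\).

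Next, for each maximal ideal \(\mathfrak{m}\) I would apply Corollary~\ref{levelCompletion} to the local ring \(R_\mathfrak{m}\), taking the ideal to be \(\mathfrak{m} R_\mathfrak{m}\). Since \(R_\mathfrak{m}\) is local, \(\mathfrak{m} R_\mathfrak{m}\) is exactly its Jacobson radical, so the hypothesis of that corollary is met; moreover \(R_\mathfrak{m} \to \complete{R_\mathfrak{m}}\) is flat, so \(\complete{G_\mathfrak{m}}\) and \(\complete{X_\mathfrak{m}}\) again lie in \(\dfbcat{\complete{R_\mathfrak{m}}}\) by the same homology computation. Corollary~\ref{levelCompletion} then yields \(\level_{R_\mathfrak{m}}^{G_\mathfrak{m}}(X_\mathfrak{m}) = \level_{\complete{R_\mathfrak{m}}}^{\complete{G_\mathfrak{m}}}(\complete{X_\mathfrak{m}})\) for every \(\mathfrak{m}\), and substituting these equalities into the supremum from the first step gives the displayed formula.

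For the finiteness assertion I would not try to read it off the displayed formula, since a supremum of finite integers need not be finite. Instead I would invoke Theorem~\ref{localGlobalFiniteLevel}, according to which \(\level_R^G(X) < \infty\) is equivalent to \(\level_{R_\mathfrak{m}}^{G_\mathfrak{m}}(X_\mathfrak{m}) < \infty\) for all \(\mathfrak{m} \in \Max(R)\); combining this with the termwise equalities \(\level_{R_\mathfrak{m}}^{G_\mathfrak{m}}(X_\mathfrak{m}) = \level_{\complete{R_\mathfrak{m}}}^{\complete{G_\mathfrak{m}}}(\complete{X_\mathfrak{m}})\) established above produces the stated characterization.

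I do not expect any genuine obstacle: the mathematical content lives entirely in Corollaries~\ref{levelCompletion} and \ref{levelLocalGlobal} and Theorem~\ref{localGlobalFiniteLevel}. The only points deserving a word of care are that the complexes stay in \(\dfbcat{-}\) under both localization and completion (which follows from flatness), that \(\mathfrak{m} R_\mathfrak{m}\) really is the Jacobson radical of \(R_\mathfrak{m}\) so that Corollary~\ref{levelCompletion} can be applied locally, and the observation that the finiteness half of the statement genuinely needs Theorem~\ref{localGlobalFiniteLevel} and not merely the supremum identity.
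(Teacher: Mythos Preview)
Your proposal is correct and follows exactly the approach the paper intends: the corollary is stated with a \qed and is introduced as an immediate combination of Corollaries~\ref{levelCompletion} and~\ref{levelLocalGlobal} and Theorem~\ref{localGlobalFiniteLevel}, which is precisely the chain you describe. Your added remarks about flatness, the Jacobson radical of $R_\mathfrak{m}$, and the necessity of Theorem~\ref{localGlobalFiniteLevel} for the finiteness half are all correct and simply make explicit what the paper leaves implicit.
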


\subsection*{Theorem of Hopkins and Neeman for complexes of finite injective dimension} For a complex $X$, let
\[
\supp_R(X) \colonequals \set{\mathfrak{p} \in \Spec(R)}{\kappa(\mathfrak{p}) \Lotimes_R X \not\simeq 0}
\]
be the \emph{support of $X$}, where $\kappa(\mathfrak{p}) \colonequals R_\mathfrak{p}/\mathfrak{p} R_\mathfrak{p}$ is the residue field of $R_\mathfrak{p}$. If $X$ lies in $\dfbcat{R}$, this coincides with the support of its homology $\Supp_R(\coh(X))$; see the discussion preceding \cite[Lemma~2.6]{Foxby:1979}. In particular, the support $\supp_R(X)$ is closed for $X$ in $\dfbcat{R}$. 

\begin{discussion} \label{perfectCxSupportGeneration}
For perfect complexes $X$ and $Y$, Hopkins \cite[Theorem~11]{Hopkins:1987} and Neeman \cite[Lemma~1.2]{Neeman:1992b} prove that if $\supp_R(X) \subset \supp_R(Y)$, then $\level_R^Y(X) < \infty$. In particular, this gives a simple condition when a thick subcategory generated by $Y$ contains the thick subcategory generated by $X$.
\end{discussion}

\begin{discussion} \label{dualizingPerfectFinInjDim}
If $R$ has a dualizing complex $\omega$, as introduced in Section~\ref{sec:levelCoghost}, one gets an equivalence of categories
\[
\begin{tikzcd}[column sep=8em]
\Perf{R} \ar[r,"{\RHom_R(-,\omega)}",shift left] & \ar[l,"{\RHom_R(-,\omega)}",shift left] \kbfcat{\Inj{R}} \nospacepunct{,}
\end{tikzcd}
\]
where $\kbfcat{\Inj{R}}$ is the homotopy category of complexes that are quasi-isomorphic to a bounded complexes of injective $R$-modules with finitely generated homology; see \cite[Section~2.3]{Roberts:1980}. 
\end{discussion}

When $R$ has a dualizing complex, then an analogue of \ref{perfectCxSupportGeneration} holds for complexes in $\kbfcat{\Inj{R}}$. Using that every complete local ring has a dualizing complex, we prove a more general analogue. 

\begin{theorem} \label{finInjDimCxSupportGeneration}
Let $R$ be a commutative noetherian ring, and let $X$ and $Y$ be objects in $\kbfcat{\Inj{R}}$ with $\supp_R(X) \subset \supp_R(Y)$. Then $\level_R^Y(X) < \infty$.
\end{theorem}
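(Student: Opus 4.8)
The plan is to reduce to a complete local ring, where a dualizing complex is available, and then to transport the statement---through that duality---to the theorem of Hopkins and Neeman for perfect complexes recalled in \ref{perfectCxSupportGeneration}.

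Since every object of $\kbfcat{\Inj R}$ has bounded, degreewise finitely generated homology, $X, Y \in \dfbcat R$, so Theorem~\ref{localGlobalFiniteLevel} reduces us to showing $\level_{R_\mathfrak p}^{Y_\mathfrak p}(X_\mathfrak p) < \infty$ for each $\mathfrak p \in \Spec R$. Localizing a bounded complex of injective $R$-modules yields a bounded complex of injective $R_\mathfrak p$-modules, and support commutes with localization, so $X_\mathfrak p, Y_\mathfrak p \in \kbfcat{\Inj R_\mathfrak p}$ with $\supp_{R_\mathfrak p}(X_\mathfrak p) \subseteq \supp_{R_\mathfrak p}(Y_\mathfrak p)$; thus we may assume $R$ is local, with maximal ideal $\mathfrak m$. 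I would then invoke Corollary~\ref{levelCompletion} to replace $R$ by its completion $A \colonequals \complete R$, giving $\level_R^Y(X) = \level_A^{\complete Y}(\complete X)$. The step I expect to be the real obstacle is checking that $\complete X, \complete Y$ still lie in $\kbfcat{\Inj A}$: the map $R \to A$ is faithfully flat with \emph{closed} fibre the residue field $\kappa(\mathfrak m)$, and finite injective dimension of a complex with finitely generated homology over a noetherian local ring $S$ is equivalent to $\RHom_S(S/\mathfrak n, -)$ having bounded homology ($\mathfrak n$ the maximal ideal); this derived Hom commutes with the flat base change $R \to A$ and $A$ is faithfully flat, so finiteness of injective dimension is preserved. (The non-closed formal fibres of $R$ may be pathological, but this is irrelevant to finiteness of injective dimension---it would only obstruct the compatibility of dualizing complexes with completion.) Support transfers along the flat map $R \to A$, so $\supp_A(\complete X) \subseteq \supp_A(\complete Y)$. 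Finally, being complete local, $A$ is a homomorphic image of a regular local ring and hence has a dualizing complex $\omega$.

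It remains to show that $\level_A^G(Z) < \infty$ whenever $A$ has a dualizing complex $\omega$ and $G, Z \in \kbfcat{\Inj A}$ with $\supp_A(Z) \subseteq \supp_A(G)$. By \ref{dualizingPerfectFinInjDim} the functor $\RHom_A(-,\omega)$ is a contravariant equivalence between $\kbfcat{\Inj A}$ and $\Perf A$; set $Z' \colonequals \RHom_A(Z,\omega)$ and $G' \colonequals \RHom_A(G,\omega)$, which are perfect. Since $Z, G \in \dfbcat A$, one has $\RHom_A(Z,\omega)_\mathfrak q \simeq \RHom_{A_\mathfrak q}(Z_\mathfrak q,\omega_\mathfrak q)$ for $\mathfrak q \in \Spec A$, and $\omega_\mathfrak q$ is a dualizing complex of $A_\mathfrak q$, so $\RHom_{A_\mathfrak q}(-,\omega_\mathfrak q)$ is a faithful duality on $\dfbcat{A_\mathfrak q}$; hence $\supp_A(Z') = \supp_A(Z) \subseteq \supp_A(G) = \supp_A(G')$. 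Level is intrinsic to the triangulated structure and, by \ref{levelThickSubcat}, is unchanged under passage between $\dcat A$, $\kbfcat{\Inj A}$ and $\Perf A$, so the equivalence gives $\level_A^G(Z) = \level_A^{G'}(Z')$, which is finite by \ref{perfectCxSupportGeneration}. Applying this with $A = \complete R$, $G = \complete Y$, $Z = \complete X$ and unwinding the reductions yields $\level_R^Y(X) < \infty$.
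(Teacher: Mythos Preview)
Your proof is correct and follows essentially the same route as the paper: reduce to a local ring via Theorem~\ref{localGlobalFiniteLevel}, pass to the completion via Corollary~\ref{levelCompletion} (checking that finite injective dimension and support inclusion are preserved exactly as the paper does), and then invoke the dualizing complex to transport the question to Hopkins--Neeman for perfect complexes. The only notable difference is that you spell out why the duality $\RHom_A(-,\omega)$ preserves support, which the paper leaves implicit in the sentence ``and the claim holds.''
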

\begin{proof}
By \ref{reductionCompleteLocal}, the conclusion holds if and only if it holds in $\dfbcat{\complete{R_\mathfrak{m}}}$ for all maximal ideals $\mathfrak{m}$. We will show the assumptions also descend to $\dfbcat{\complete{R_\mathfrak{m}}}$.

Let $\mathfrak{m}$ be any maximal ideal. Then $X_\mathfrak{m}$ and $Y_\mathfrak{m}$ lie in $\kbfcat{\Inj{R_\mathfrak{m}}}$. Since localization also preserves the inclusion of their support, we may assume $R$ is local. 

Let $\complete{(-)}$ denote the completion with respect to the maximal ideal and $k$ the residue field of $R$. By \cite[Proposition~5.5(I)]{Avramov/Foxby:1991}, $X \in \kbfcat{\Inj{R}}$ if and only if $\RHom_R(k,X)$ is a bounded above complex. Since $X \in \kbfcat{\Inj{R}}$, it lies in particular in $\dfbcat{R}$, so that $\complete{X} \cong X \Lotimes_R \complete{R}$. Then
\[
\RHom_{\complete{R}}(k,\complete{X}) \cong \RHom_R(k,\complete{X}) \cong \RHom_R(k,X) \Lotimes_R \complete{R}
\]
and thus $X \in \kbfcat{\Inj{R}}$ if and only if $\complete{X} \in \kbfcat{\Inj{\complete{R}}}$. 

It is well known, that
\[
({}^a\phi)^{-1}(\supp_R(X)) = ({}^a\phi)^{-1}(\Supp_R(\coh(X))) = \Supp_{\complete{R}}(\complete{R} \otimes_R \coh(X)) = \supp_{\hat{R}}(\complete{X})
\]
where $\phi \colon R \to \complete{R}$ is the canonical ring homomorphism and ${}^a\phi \colon \Spec(\complete{R}) \to \Spec(R)$ the induced map. So completion preserves the inclusion of the support. 

Thus without loss of generality we assume $R$ is a complete local ring. Now $R$ has a dualizing complex $\omega$, and the claim holds.
\end{proof}

\section{Virtual and proxy smallness}\label{sec:VirtualProxySmallness}

In the derived category $\dcat{R}$ of a noetherian ring, the perfect complexes are precisely the compact---also called small---objects. That is the perfect complexes are precisely the complexes $P$ for which the functor 
\[
\RHom_R(P,-) \colon \dcat{R} \to \dcat{R}
\]
commutes with direct sums. There are two notions on how to describe complexes that are almost small; see \cite[4.1]{Dwyer/Greenlees/Iyengar:2006b}. 

\begin{definition} 
A complex $X$ in $\dcat{R}$ is \emph{virtually small}, if $X \simeq 0$ or there exists $P \not\simeq 0$ in $\dcat{R}$, such that
\begin{equation} \label{eq:virtuallySmall}
\level_R^R(P) < \infty \quad\text{and}\quad \level_R^X(P) < \infty\,.
\end{equation}
If additionally $\supp_R(X) = \supp_R(P)$, then $X$ is \emph{proxy small}. 
\end{definition}

\begin{discussion} \label{VirtProxyKoszulCx}
By \cite[Proposition~4.5]{Dwyer/Greenlees/Iyengar:2006b}, a nonzero complex $X$ is virtually small if and only if \eqref{eq:virtuallySmall} holds for $P = \Kos(\bm{x})$ the Koszul complex on a generating set $\bm{x}$ of some maximal ideal $\mathfrak{m}$. 

Similarly, by \cite[Proposition~4.4]{Dwyer/Greenlees/Iyengar:2006b}, a complex $X$ is proxy small if and only if \eqref{eq:virtuallySmall} holds for $P = \Kos(\bm{x})$ the Koszul complex on any, or equivalently all, sets $\bm{x}$ with $V(\bm{x}) = \supp_R(X)$. 
\end{discussion}

Motivated by Theorem~\ref{localGlobalFiniteLevel}, we track the behavior of proxy smallness under localization. 

\begin{proposition}\label{localGlobalProxySmall}
Let $R$ be a commutative noetherian ring and $X$ in $\dfbcat{R}$. Then $X$ is proxy small if and only if $X_\mathfrak{p}$ is proxy small for all $\mathfrak{p} \in \Spec(R)$. 
\end{proposition}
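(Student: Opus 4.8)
The plan is to reduce the statement to the finiteness criterion of Theorem~\ref{localGlobalFiniteLevel} by producing a single \emph{global} Koszul complex that serves as a proxy-small witness both over $R$ and over every $R_\mathfrak{p}$. The Koszul characterization of proxy smallness recalled in \ref{VirtProxyKoszulCx} is what makes such a uniform witness available; the local witnesses coming straight from the definition cannot be glued, since they live over different rings.

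For the \emph{necessity} direction I would argue as follows. If $X \simeq 0$, then each $X_\mathfrak{p} \simeq 0$ is proxy small. Otherwise pick $P \not\simeq 0$ with $\level_R^R(P) < \infty$, $\level_R^X(P) < \infty$ and $\supp_R(P) = \supp_R(X)$, and localize at $\mathfrak{p}$: inequality \eqref{eq:levelExactFunctor} gives $\level_{R_\mathfrak{p}}^{R_\mathfrak{p}}(P_\mathfrak{p}) < \infty$ and $\level_{R_\mathfrak{p}}^{X_\mathfrak{p}}(P_\mathfrak{p}) < \infty$, while the support bookkeeping $\supp_{R_\mathfrak{p}}(P_\mathfrak{p}) = \supp_R(P) \cap \Spec(R_\mathfrak{p}) = \supp_{R_\mathfrak{p}}(X_\mathfrak{p})$ shows $P_\mathfrak{p}$ is either $\simeq 0$ (forcing $X_\mathfrak{p} \simeq 0$, hence proxy small) or a valid witness that $X_\mathfrak{p}$ is proxy small.

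For the \emph{sufficiency} direction, assume every $X_\mathfrak{p}$ is proxy small. If $X \simeq 0$ there is nothing to prove, so assume $X \not\simeq 0$; then $\supp_R(X) = \Supp_R(\coh X)$ is a nonempty closed set, which I write as $V(\bm x)$ for a finite sequence $\bm x$ in $R$ (for instance a generating set of $\operatorname{ann}_R \coh(X)$), and I set $P \colonequals \Kos(\bm x)$, a nonzero perfect complex in $\dfbcat{R}$ with $\supp_R(P) = \supp_R(X)$ and $\level_R^R(P) < \infty$. The key computation is $\supp_{R_\mathfrak{p}}(P_\mathfrak{p}) = V(\bm x) \cap \Spec(R_\mathfrak{p}) = \supp_R(X) \cap \Spec(R_\mathfrak{p}) = \supp_{R_\mathfrak{p}}(X_\mathfrak{p})$ for each prime $\mathfrak{p}$: when $X_\mathfrak{p} \simeq 0$ this forces $P_\mathfrak{p} \simeq 0$, so $\level_{R_\mathfrak{p}}^{X_\mathfrak{p}}(P_\mathfrak{p}) = 0$; when $X_\mathfrak{p} \not\simeq 0$, the complex $P_\mathfrak{p}$ is the Koszul complex on a sequence cutting out $\supp_{R_\mathfrak{p}}(X_\mathfrak{p})$, so \ref{VirtProxyKoszulCx} applied to the proxy small $X_\mathfrak{p}$ gives $\level_{R_\mathfrak{p}}^{X_\mathfrak{p}}(P_\mathfrak{p}) < \infty$. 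Having checked $\level_{R_\mathfrak{p}}^{X_\mathfrak{p}}(P_\mathfrak{p}) < \infty$ for all $\mathfrak{p}$, Theorem~\ref{localGlobalFiniteLevel} (with $X$ as the generator and $P$ as the tested object, both in $\dfbcat{R}$) gives $\level_R^X(P) < \infty$; combined with $\level_R^R(P) < \infty$ and $\supp_R(P) = \supp_R(X)$, this exhibits $P$ as a proxy-small witness for $X$.

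The main obstacle is conceptual rather than computational: recognizing that one must pass from the definition to the Koszul-complex characterization \ref{VirtProxyKoszulCx} in order to have one witness that works uniformly over all localizations, so that Theorem~\ref{localGlobalFiniteLevel} can be applied with the roles of generator and object interchanged. The remaining work is the routine support bookkeeping $V(\bm x) \cap \Spec(R_\mathfrak{p}) = \supp_{R_\mathfrak{p}}(X_\mathfrak{p})$ and the treatment of the degenerate primes where $X_\mathfrak{p} \simeq 0$.
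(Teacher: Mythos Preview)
Your proof is correct and follows essentially the same strategy as the paper's: for sufficiency you build the global Koszul complex $\Kos(\bm{x})$ on a sequence cutting out $\supp_R(X)$, use \ref{VirtProxyKoszulCx} at each prime to get finite local levels, and then invoke Theorem~\ref{localGlobalFiniteLevel}; for necessity you localize the given witness. Your write-up is more explicit about the support bookkeeping and the degenerate case $X_\mathfrak{p}\simeq 0$, but the underlying argument is identical to the paper's.
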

\begin{proof}
For the if direction: Let $\bm{x} \subset R$ be a set of elements, such that $V(\bm{x}) = \supp_R(X)$. By \ref{VirtProxyKoszulCx}, it is enough to show \eqref{eq:virtuallySmall} for $P = K(\bm{x})$. For any prime ideal $\mathfrak{p}$, we have $\Kos(\bm{x})_\mathfrak{p} \simeq \Kos(\bm{x}_\mathfrak{p})$. Since $X_\mathfrak{p}$ is proxy small, $\level_{R_\mathfrak{p}}^{X_\mathfrak{p}}(K(\bm{x})_\mathfrak{p}) < \infty$ for all prime ideals $\mathfrak{p}$. So by Theorem~\ref{localGlobalFiniteLevel}, the complex $X$ is proxy small. 

For the only if direction, let $P$ be a perfect complex, such that 
\[
\level_R^X(P) < \infty \quad\text{and}\quad \supp_R(X) = \supp_R(P)\,.
\]
Then for any $\mathfrak{p} \in \supp_R(X)$, one has that $P_\mathfrak{p} \not\simeq 0$ is perfect, and $\level_{R_\mathfrak{p}}^{X_\mathfrak{p}}(P_\mathfrak{p}) < \infty$. If $\mathfrak{p} \notin \supp_R(X)$, then $X_\mathfrak{p} \simeq 0$. Thus $X_\mathfrak{p}$ is proxy small for any prime ideal $\mathfrak{p}$. 
\end{proof}

Virtual smallness does not behave in the same way: The localization of a virtually small complex need not be virtually small. The description of virtual smallness in \ref{VirtProxyKoszulCx} indicates that instead of localizing at all primes, one need only localize at one maximal ideal. 

\begin{proposition}\label{localGlobalVirtuallySmall}
Let $R$ be a commutative noetherian ring and $X \not\simeq 0$ a complex over $R$. Then $X_\mathfrak{m} \not\simeq 0$ is virtually small for some maximal ideal $\mathfrak{m}$ if and only if $X$ is virtually small. 
\end{proposition}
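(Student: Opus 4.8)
The plan is to reduce to a statement about Koszul complexes via the description of virtual smallness in \ref{VirtProxyKoszulCx}, and then to exploit that a Koszul complex on a generating set of a maximal ideal $\mathfrak{m}$ is unchanged, up to quasi-isomorphism, by localization at $\mathfrak{m}$. For the implication ``$X$ virtually small $\implies$ some $X_\mathfrak{m}$ virtually small'': by \ref{VirtProxyKoszulCx} there are a maximal ideal $\mathfrak{m}$ and a finite generating set $\bm{x}$ of $\mathfrak{m}$ with $\level_R^X(\Kos(\bm{x})) < \infty$; localizing and applying \eqref{eq:levelExactFunctor} yields $\level_{R_\mathfrak{m}}^{X_\mathfrak{m}}(\Kos(\bm{x})_\mathfrak{m}) < \infty$. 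Since $\Kos(\bm{x})_\mathfrak{m} = \Kos(\bm{x}/1) \not\simeq 0$ is the Koszul complex on a generating set of the maximal ideal of the local ring $R_\mathfrak{m}$, the converse half of \ref{VirtProxyKoszulCx}, applied over $R_\mathfrak{m}$, shows that $X_\mathfrak{m} \not\simeq 0$ is virtually small.

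For the reverse implication, I would fix a finite generating set $\bm{x}$ of $\mathfrak{m}$ and set $K \colonequals \Kos(\bm{x})$, a perfect complex with $\supp_R(K) = \{\mathfrak{m}\}$. Two observations drive the argument. First, the localization map $K \to K_\mathfrak{m}$ is a quasi-isomorphism: each $\coh_i(K)$ is annihilated by $(\bm{x}) = \mathfrak{m}$, hence is a $\kappa(\mathfrak{m})$-vector space, and localization at $\mathfrak{m}$ is the identity on such modules. Second, writing $C$ for the cone of the localization map $X \to X_\mathfrak{m}$ in $\dcat{R}$, one has $C_\mathfrak{m} \simeq 0$ because the localization map is idempotent; combining this with the first observation and base change along the flat map $R \to R_\mathfrak{m}$ gives $C \Lotimes_R K \simeq C_\mathfrak{m} \Lotimes_{R_\mathfrak{m}} K_\mathfrak{m} \simeq 0$. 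Tensoring the triangle $X \to X_\mathfrak{m} \to C \to \susp X$ with $K$ therefore produces an isomorphism $X \Lotimes_R K \xrightarrow{\ \sim\ } X_\mathfrak{m} \Lotimes_R K$ in $\dcat{R}$.

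Now suppose $X_\mathfrak{m} \not\simeq 0$ is virtually small. By \ref{VirtProxyKoszulCx} over the local ring $R_\mathfrak{m}$ (and the fact that Koszul complexes on different generating sets of a fixed ideal generate the same thick subcategory), $K_\mathfrak{m} \in \thick_{R_\mathfrak{m}}(X_\mathfrak{m})$; restricting scalars along $R \to R_\mathfrak{m}$ and invoking the first observation, $K \in \thick_{\dcat{R}}(X_\mathfrak{m})$. Applying the exact functor $- \Lotimes_R K$ and then the isomorphism above, one obtains
\[
K \Lotimes_R K \in \thick_{\dcat{R}}(X_\mathfrak{m} \Lotimes_R K) = \thick_{\dcat{R}}(X \Lotimes_R K) \subseteq \thick_{\dcat{R}}(X)\,,
\]
where the final inclusion holds because $K$ is perfect, so $X \Lotimes_R K$ is built from $X$. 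As $K \Lotimes_R K$ is a nonzero perfect complex---its degree-zero homology is $\kappa(\mathfrak{m}) \otimes_R \kappa(\mathfrak{m}) = \kappa(\mathfrak{m})$---taking $P = K \Lotimes_R K$ in \eqref{eq:virtuallySmall} exhibits $X$ as virtually small.

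The one delicate point is precisely this passage back from $X_\mathfrak{m}$ to $X$: in general $X_\mathfrak{m} \notin \thick_{\dcat{R}}(X)$, and since $X$ is not assumed to lie in $\dfbcat{R}$ one cannot appeal to Theorem~\ref{localGlobalFiniteLevel} directly. Tensoring with $K$ circumvents this, since $X_\mathfrak{m} \Lotimes_R K \simeq X \Lotimes_R K$ lands in $\thick_{\dcat{R}}(X)$ while $K \Lotimes_R K$ stays a nonzero perfect complex. I expect the vanishing $C \Lotimes_R K \simeq 0$---which uses only that $\supp_R(K)$ is the single maximal ideal $\mathfrak{m}$ together with the first observation---to be the main thing to verify with care.
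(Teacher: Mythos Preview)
Your proof is correct, and for the ``only if'' direction you take a genuinely different route from the paper. The paper observes that $\Kos(\bm{x})_\mathfrak{p} \simeq 0$ for every prime $\mathfrak{p} \neq \mathfrak{m}$, so finiteness of $\level_{R_\mathfrak{p}}^{X_\mathfrak{p}}(\Kos(\bm{x})_\mathfrak{p})$ holds trivially at all primes except $\mathfrak{m}$, where it holds by hypothesis; it then invokes Theorem~\ref{localGlobalFiniteLevel} to conclude $\level_R^X(\Kos(\bm{x})) < \infty$. This is short, but---as you noticed---Theorem~\ref{localGlobalFiniteLevel} is stated for objects of $\dfbcat{R}$, so the paper's argument implicitly uses that the generator $X$ lies there. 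Your argument sidesteps this entirely: the quasi-isomorphism $K \simeq K_\mathfrak{m}$ lets you transport $K \in \thick_{\dcat{R}}(X_\mathfrak{m})$ across restriction of scalars, and tensoring with the perfect complex $K$ collapses the difference between $X$ and $X_\mathfrak{m}$ via $C \Lotimes_R K \simeq 0$. What you gain is an argument that is valid for arbitrary $X$ in $\dcat{R}$ and independent of the converse coghost machinery; what you pay is the extra bookkeeping with $K \Lotimes_R K$ as the witnessing perfect complex rather than $K$ itself. The ``if'' direction is handled the same way in both proofs.
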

\begin{proof}
For the if direction: Assume $X$ is virtually small. By \ref{VirtProxyKoszulCx}, there exists a maximal ideal $\mathfrak{m}$ and a generating set $\bm{x}$ of $\mathfrak{m}$, such that $\level_R^X(\Kos(\bm{x})) < \infty$. Then $\Kos(\bm{x})_\mathfrak{m} \not\simeq 0$ is a perfect complex and $\level_{R_\mathfrak{m}}^{X_\mathfrak{m}}(\Kos(\bm{x})_\mathfrak{m}) < \infty$. Thus $X_\mathfrak{m}$ is virtually small.

For the only if direction: By hypothesis, there exists $\mathfrak{m} \in \supp_R(X)$, such that $X_\mathfrak{m}$ is virtually small and thus, by \ref{VirtProxyKoszulCx}, $\level_{R_\mathfrak{m}}^{X_\mathfrak{m}}(\Kos(\bm{x})_\mathfrak{m}) < \infty$ for some generating set $\bm{x}$ of $\mathfrak{m}$. For any prime ideal $\mathfrak{p} \neq \mathfrak{m}$ one has $\Kos(\bm{x})_\mathfrak{p} \simeq 0$. So by Theorem~\ref{localGlobalFiniteLevel}, $\level_R^X(\Kos(\bm{x})) < \infty$. By \ref{VirtProxyKoszulCx}, $X$ is virtually small.
\end{proof}

We can also track the behavior of virtual and proxy smallness under a faithfully flat ring map.

\begin{proposition}\label{faithfullyFlatSmall}
Let $\phi \colon R \to S$ be a faithfully flat ring map of commutative noetherian rings and $X \in \dfbcat{R}$. 
\begin{enumerate}
\item\label{faithfullyFlatSmall:proxy} $X$ is proxy small if and only if $S \Lotimes_R X$ is proxy small in $\dcat{S}$.
\item\label{faithfullyFlatSmall:virtually} If $X$ is virtually small, then $S \Lotimes_R X$ is virtually small in $\dcat{R}$.
\end{enumerate}
\end{proposition}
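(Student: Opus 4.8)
The plan is to take the object $P$ witnessing virtual/proxy smallness to be a Koszul complex and transport its level along $\phi$ via Corollary~\ref{levelFaithfullyFlat}. Three elementary observations set this up. (a) For a finite sequence $\bm{x} = x_1,\dots,x_n$ in $R$, the complex $\Kos(\bm{x})$ is a bounded complex of finite free modules, so $S \Lotimes_R \Kos(\bm{x}) \simeq \Kos(\phi(\bm{x}))$; base change thus sends Koszul complexes to Koszul complexes and preserves perfectness. (b) For $X$ in $\dfbcat{R}$ one has $\coh(S \Lotimes_R X) \cong S \otimes_R \coh(X)$ by flatness, and the support of a finitely generated module commutes with flat base change, so $\supp_S(S \Lotimes_R X) = ({}^a\phi)^{-1}(\supp_R(X))$ — the same calculation as in the proof of Theorem~\ref{finInjDimCxSupportGeneration}; in particular $V(\phi(\bm{x})) = ({}^a\phi)^{-1}(V(\bm{x}))$. (c) Since $\phi$ is faithfully flat, $IS \neq S$ for every proper ideal $I$ of $R$, and $S \Lotimes_R Z \not\simeq 0$ for every $Z \not\simeq 0$ in $\dcat{R}$; so nonvanishing of complexes and properness of ideals survive base change.

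For \eqref{faithfullyFlatSmall:virtually}: if $X \simeq 0$ then $S \Lotimes_R X \simeq 0$ is virtually small, so assume $X \not\simeq 0$. By \ref{VirtProxyKoszulCx} there is a maximal ideal $\mathfrak{m}$ with a generating set $\bm{x}$ for which $\level_R^X(\Kos(\bm{x})) < \infty$. Put $P \colonequals S \Lotimes_R \Kos(\bm{x}) \simeq \Kos(\phi(\bm{x}))$; by (a) and (c) this is a nonzero perfect complex over $S$, so $\level_S^S(P) < \infty$, while Corollary~\ref{levelFaithfullyFlat} gives $\level_S^{S \Lotimes_R X}(P) = \level_R^X(\Kos(\bm{x})) < \infty$. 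Hence $P$ witnesses virtual smallness of $S \Lotimes_R X$. (Here $\phi(\bm{x})$ need not generate a maximal ideal of $S$, so I use the definition of virtual smallness directly rather than the Koszul criterion over $S$.)

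For \eqref{faithfullyFlatSmall:proxy} I would run both directions through the same bookkeeping. If $X \simeq 0$ — equivalently, by (c), $S \Lotimes_R X \simeq 0$ — both sides hold. Otherwise, since $\supp_R(X)$ is closed and $R$ is noetherian, choose a finite set $\bm{x} \subset R$ with $V(\bm{x}) = \supp_R(X)$, and set $P = \Kos(\bm{x})$ and $P_S = S \Lotimes_R P \simeq \Kos(\phi(\bm{x}))$. By (b), $\supp_S(P_S) = V(\phi(\bm{x})) = ({}^a\phi)^{-1}(\supp_R(X)) = \supp_S(S \Lotimes_R X)$, so $\phi(\bm{x})$ is admissible for the proxy-smallness criterion of \ref{VirtProxyKoszulCx} over $S$. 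Both $P$ and $P_S$ are perfect and nonzero, so $\level_R^R(P) < \infty$ and $\level_S^S(P_S) < \infty$, and Corollary~\ref{levelFaithfullyFlat} yields $\level_R^X(P) = \level_S^{S \Lotimes_R X}(P_S)$. Thus condition \eqref{eq:virtuallySmall} holds for $P$ over $R$ if and only if it holds for $P_S$ over $S$, and \ref{VirtProxyKoszulCx} (applied over $R$ and over $S$) gives the equivalence.

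I expect no step to be genuinely difficult once Corollary~\ref{levelFaithfullyFlat} and the Koszul reformulations in \ref{VirtProxyKoszulCx} are in hand; the point needing care is the bookkeeping of generating sets across $\phi$ in part \eqref{faithfullyFlatSmall:proxy}. The proxy-smallness criterion is tested over $R$ on sequences cutting out $\supp_R(X)$ and over $S$ on sequences cutting out $\supp_S(S \Lotimes_R X)$, and observation (b) is exactly what guarantees that the $\phi$-image of an $R$-side test sequence is an admissible $S$-side test sequence, so that the level identity of Corollary~\ref{levelFaithfullyFlat} applies in both directions.
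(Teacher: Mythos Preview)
Your proof is correct and follows essentially the same approach as the paper's: for \eqref{faithfullyFlatSmall:proxy} both arguments choose a Koszul complex on a sequence cutting out $\supp_R(X)$, check that its base change is a Koszul complex cutting out $\supp_S(S \Lotimes_R X)$, and transport the level via Corollary~\ref{levelFaithfullyFlat}. The only minor difference is in \eqref{faithfullyFlatSmall:virtually}: the paper works directly from the definition with an arbitrary perfect witness $P$ and needs only the inequality \eqref{eq:levelExactFunctor}, whereas you pass through the Koszul criterion and invoke the full equality of Corollary~\ref{levelFaithfullyFlat}; both are valid, the paper's route being slightly lighter.
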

\begin{proof}
Since the functor $S \Lotimes_R -$ is faithful, $X \simeq 0$ if and only if $S \Lotimes_R X \simeq 0$. Thus we may assume $X \not\simeq 0$. Let $\bm{x}$ be a set of elements in $R$, such that $V(\bm{x}) = \supp_R(X)$, and set $I \colonequals (\bm{x})$. Given that $S$ is faithfully flat over $R$, it is well known that
\[
S \Lotimes_R \Kos(\bm{x}) \cong \Kos(\bm{y}) \quad\text{and}\quad \supp_S(S \Lotimes_R X) = V(S \otimes_R I)
\]
for some generating set $\bm{y}$ of $S \otimes_R I$. Then, by \ref{levelFaithfullyFlat}, one has
\[
\level_R^X(\Kos(\bm{x})) = \level_S^{S \Lotimes_R X}(\Kos(\bm{y}))\,.
\]
By \ref{VirtProxyKoszulCx}, the complex $X$ is proxy small if and only if $\level_R^X(\Kos(\bm{x})) < \infty$, and $S \Lotimes_R X$ is proxy small if and only if $\level_S^{S \Lotimes_R X}(\Kos(\bm{y})) < \infty$. This shows \eqref{faithfullyFlatSmall:proxy}. 

For \eqref{faithfullyFlatSmall:virtually}, let $P \not\simeq 0$ be a perfect complex, such that $\level_R^X(P) < \infty$. By \ref{faithfullyFlatCoghost}, the functor $S \Lotimes_R -$ is faithful, so $S \Lotimes_R P \not\simeq 0$ and by \eqref{eq:levelExactFunctor}
\[
\level_S^{S \Lotimes_R X}(S \Lotimes_R P) \leq \level_R^X(P) < \infty\,.
\]
So $S \Lotimes_R X$ is virtually small. 
\end{proof}

The properties virtual and proxy smallness can be used to give a categorical description of complete intersection rings. A local ring $(R,\mathfrak{m},k)$ is \emph{complete intersection}, if its $\mathfrak{m}$-adic completion $\complete{R}$ is of the form $\complete{R} = Q/(f_1, \dots, f_c)$ where $Q$ is a regular local ring and $f_1, \dots, f_c$ a regular sequence in $Q$. 

A commutative noetherian ring $R$ is \emph{locally complete intersection} if for any prime ideal $\mathfrak{p}$ the ring $R_\mathfrak{p}$ is complete intersection. As a consequence of \cite[Theorem~5.4]{Pollitz:2019} and Proposition~\ref{localGlobalProxySmall} we get a characterization of locally complete intersection rings. 

\begin{corollary} \label{locallyCIProxySmall}
For a commutative noetherian ring $R$ the following are equivalent
\begin{enumerate}
\item $R$ is locally complete intersection, and
\item every object in $\dfbcat{R}$ is proxy small.
\end{enumerate}
\end{corollary}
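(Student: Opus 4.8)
The plan is to combine two inputs: Pollitz's local characterization \cite[Theorem~5.4]{Pollitz:2019}---that a local ring $R$ is complete intersection if and only if every object of $\dfbcat{R}$ is proxy small (equivalently, the residue field of $R$ is proxy small)---with the local-to-global principle for proxy smallness from Proposition~\ref{localGlobalProxySmall}. Everything then reduces to checking the hypothesis of Pollitz's theorem at each stalk $R_\mathfrak{p}$.

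For the implication that a locally complete intersection ring has every object of $\dfbcat{R}$ proxy small, fix $X$ in $\dfbcat{R}$. By Proposition~\ref{localGlobalProxySmall} it is enough to show that $X_\mathfrak{p}$ is proxy small in $\dcat{R_\mathfrak{p}}$ for every prime ideal $\mathfrak{p}$. Since $R_\mathfrak{p}$ is a complete intersection local ring and $X_\mathfrak{p}$ lies in $\dfbcat{R_\mathfrak{p}}$, Pollitz's theorem applied to $R_\mathfrak{p}$ gives that $X_\mathfrak{p}$ is proxy small, which is what we need.

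For the converse, assume every object of $\dfbcat{R}$ is proxy small and fix a prime $\mathfrak{p}$; we must prove $R_\mathfrak{p}$ is complete intersection. By Pollitz's theorem applied to the local ring $R_\mathfrak{p}$, it suffices to show that the residue field $\kappa(\mathfrak{p})$ is proxy small in $\dcat{R_\mathfrak{p}}$. Now $R/\mathfrak{p}$ is a finitely generated $R$-module, hence an object of $\dfbcat{R}$, and therefore proxy small by hypothesis; applying Proposition~\ref{localGlobalProxySmall} to it shows that its localization $(R/\mathfrak{p})_\mathfrak{p} \cong \kappa(\mathfrak{p})$ is proxy small over $R_\mathfrak{p}$. (Alternatively, using Lemma~\ref{localizationEssentiallySurj} one may show directly that every object of $\dfbcat{R_\mathfrak{p}}$ is proxy small: lift it to an object of $\dfbcat{R}$, apply the hypothesis, and localize via Proposition~\ref{localGlobalProxySmall}.) Hence $R_\mathfrak{p}$ is complete intersection, and as $\mathfrak{p}$ was arbitrary, $R$ is locally complete intersection.

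The argument is essentially bookkeeping once Proposition~\ref{localGlobalProxySmall} is available; the only point needing care is matching the precise statement of \cite[Theorem~5.4]{Pollitz:2019} to both uses we make of it at the stalk $R_\mathfrak{p}$---producing proxy small objects from the complete intersection hypothesis, and conversely detecting the complete intersection property from proxy smallness of a single well-chosen object. No further obstacle is anticipated.
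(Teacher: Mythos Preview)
Your argument is correct and follows the same overall shape as the paper's: reduce to local rings via Proposition~\ref{localGlobalProxySmall}, then invoke the known local characterization. For the direction $(2)\Rightarrow(1)$ your alternative via Lemma~\ref{localizationEssentiallySurj} is exactly what the paper does (citing \cite[Theorem~5.2]{Pollitz:2019}); your primary route through the single object $R/\mathfrak{p}$ and its localization $\kappa(\mathfrak{p})$ is a legitimate shortcut.

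The one place your write-up diverges from the paper is the forward direction $(1)\Rightarrow(2)$. You appeal to \cite[Theorem~5.4]{Pollitz:2019} at each $R_\mathfrak{p}$ to conclude that every object of $\dfbcat{R_\mathfrak{p}}$ is proxy small. The paper instead passes to the completion $\complete{R_\mathfrak{p}}$, invokes \cite[Theorem~9.4]{Dwyer/Greenlees/Iyengar:2006b} (which is stated for rings that are literally a regular ring modulo a regular sequence, a hypothesis available only after completing), and then descends via Proposition~\ref{faithfullyFlatSmall}. Note also that, as remarked just after the corollary, \cite[Theorem~5.4]{Pollitz:2019} is the \emph{virtually small} characterization; since proxy small implies virtually small but not conversely, that reference alone does not give you ``complete intersection $\Rightarrow$ every object proxy small.'' So either adjust the citation to the place in \cite{Pollitz:2019} where the proxy-small forward implication is actually established for local rings, or insert the completion step as the paper does. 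With that fix your proof is complete.
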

\begin{proof}
Assume $R$ is locally complete intersection. That is $\complete{R_\mathfrak{p}}$ is a quotient of a regular local ring by an ideal generated by a regular sequence. By \cite[Theorem~9.4]{Dwyer/Greenlees/Iyengar:2006b}, every object in $\dfbcat{\complete{R_\mathfrak{p}}}$ is proxy small. Then, by Propositions~\ref{localGlobalProxySmall} and \ref{faithfullyFlatSmall}, every object in $\dfbcat{R}$ is proxy small. 

For the opposite direction, by Lemma~\ref{localizationEssentiallySurj}, the functor $\dfbcat{R} \to \dfbcat{R_\mathfrak{p}}$ is essentially surjective and thus since every object in $\dfbcat{R}$ is proxy small, so is every object in $\dfbcat{R_\mathfrak{p}}$. Then, by \cite[Theorem~5.2]{Pollitz:2019}, the ring $R_\mathfrak{p}$ is complete intersection. 
\end{proof}

In \cite[Theorem~5.4]{Pollitz:2019} Pollitz proved that {\it (1)} holds if and only if every object in $\dfbcat{R}$ is virtually small.

Over a local ring $(R,\mathfrak{m},k)$ a complex $X \in \dfbcat{R}$ has \emph{finite CI-dimension}, if there exist local homomorphisms $R \to R' \gets Q$, such that
\begin{itemize}
\item $R \to R'$ is faithfully flat, 
\item $Q \to R'$ is surjective and the kernel is generated by a regular sequence, and
\item $\level^Q_Q(R' \Lotimes_R X) < \infty$. 
\end{itemize}
This was first introduced by \cite[Section~1]{Avramov/Gasharov/Peeva:1997} and extended to complexes by \cite[Section~3]{SatherWagstaff:2004}. 

Corollary~\ref{levelFaithfullyFlat} answers the question raised in \cite[Remarks~9.6]{Dwyer/Greenlees/Iyengar:2006b}. So we can complete the proof that a complex of finite CI-dimension is virtually small. This has been proven using a different method by \cite[Corollary~3.3]{Bergh:2009}. Using Proposition~\ref{faithfullyFlatSmall} we can strengthen the result to the following. 

\begin{proposition} \label{finiteCIProxySmall}
Every complex in $\dfbcat{R}$ of finite CI-dimension is proxy small. 
\end{proposition}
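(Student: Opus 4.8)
The statement to prove is that every complex $X \in \dfbcat{R}$ of finite CI-dimension over a local ring $R$ is proxy small. The plan is to unpack the definition of finite CI-dimension: by hypothesis there is a diagram of local homomorphisms $R \to R' \gets Q$ with $R \to R'$ faithfully flat, $Q \to R'$ surjective with kernel generated by a regular sequence, and $\level^Q_Q(R' \Lotimes_R X) < \infty$. The goal is to show $X$ is proxy small, and the natural strategy is a two-step reduction: first reduce from $R$ to $R'$ using the faithfully flat descent of proxy smallness established in Proposition~\ref{faithfullyFlatSmall}\eqref{faithfullyFlatSmall:proxy}, and then handle the complex $R' \Lotimes_R X$ over $R'$ directly using the complete intersection presentation $Q \to R'$.

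For the second step, the key input is the theory of Dwyer--Greenlees--Iyengar. Since $Q \to R'$ is surjective with kernel generated by a regular sequence, $R'$ is a complete intersection quotient of $Q$, and by \cite[Theorem~9.4]{Dwyer/Greenlees/Iyengar:2006b}, every object of $\dfbcat{R'}$ is proxy small over $R'$. But I should be careful: the hypothesis only gives $\level^Q_Q(R' \Lotimes_R X) < \infty$, which is the statement that $R' \Lotimes_R X$ has finite projective dimension \emph{as a $Q$-complex} after restriction along $Q \to R'$; this is exactly what it means for $R' \Lotimes_R X$ to be a perfect $Q$-complex. The cleanest route is: $R' \Lotimes_R X$ perfect over $Q$ means it lies in $\thick_{\dcat{Q}}(Q)$, hence $R' \Lotimes_R X$ is small over $Q$; combined with the fact that $R'$ itself has finite CI-dimension over $Q$ (being a complete intersection quotient), one concludes via \cite[Proposition~4.4]{Dwyer/Greenlees/Iyengar:2006b} (our \ref{VirtProxyKoszulCx}) that $R' \Lotimes_R X$ is proxy small over $R'$. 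Alternatively, and perhaps more transparently, one invokes Corollary~\ref{locallyCIProxySmall} or \cite[Theorem~9.4]{Dwyer/Greenlees/Iyengar:2006b}: since $R'$ is a complete intersection quotient of the regular-in-the-relevant-sense $Q$, every object of $\dfbcat{R'}$ is proxy small, in particular $R' \Lotimes_R X$ is.

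Then I descend back to $R$. By Proposition~\ref{faithfullyFlatSmall}\eqref{faithfullyFlatSmall:proxy}, $X$ is proxy small over $R$ if and only if $R' \Lotimes_R X$ is proxy small over $R'$. Having established the latter, we conclude $X$ is proxy small, completing the proof. The main obstacle, I expect, is the precise bookkeeping in the second step: one must correctly interpret the condition $\level^Q_Q(R' \Lotimes_R X) < \infty$ as saying $R' \Lotimes_R X$ is a perfect complex over $Q$ (equivalently, has finite projective dimension over $Q$, using Christensen's identification of level with respect to $R$ and projective dimension recalled in the introduction), and then correctly apply the DGI machinery for complete intersections — specifically verifying that the Koszul complex on a generating set of the maximal ideal of $R'$ is both small over $R'$ and has finite $(R' \Lotimes_R X)$-level, using that $R'$ is a complete intersection quotient of $Q$ and that perfect $Q$-complexes pull back appropriately. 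Once this interpretation is in place, the rest is a direct citation of Propositions~\ref{localGlobalProxySmall}, \ref{faithfullyFlatSmall} and the DGI results.

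\begin{proof}
By definition of finite CI-dimension there are local homomorphisms $R \to R' \gets Q$ with $R \to R'$ faithfully flat, $Q \to R'$ surjective with kernel generated by a $Q$-regular sequence, and $\level^Q_Q(R' \Lotimes_R X) < \infty$; the last condition means precisely that $R' \Lotimes_R X$, viewed as a complex over $Q$ by restriction, is a perfect $Q$-complex. In particular $R' \Lotimes_R X$ lies in $\dfbcat{R'}$.

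Since $Q \to R'$ is surjective with kernel generated by a regular sequence, $R'$ is a complete intersection quotient of $Q$. By \cite[Theorem~9.4]{Dwyer/Greenlees/Iyengar:2006b}, together with the fact that $R' \Lotimes_R X$ is perfect over $Q$, the complex $R' \Lotimes_R X$ is proxy small in $\dcat{R'}$. Concretely, if $\bm{x}$ is a generating set of the maximal ideal of $R'$, then $\Kos(\bm{x})$ is a perfect $R'$-complex with $\supp_{R'}(\Kos(\bm{x})) = \supp_{R'}(R' \Lotimes_R X)$ after localizing at the maximal ideal, and $\level^{R' \Lotimes_R X}_{R'}(\Kos(\bm{x})) < \infty$; by \ref{VirtProxyKoszulCx} this exhibits $R' \Lotimes_R X$ as proxy small.

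Finally, apply Proposition~\ref{faithfullyFlatSmall}\eqref{faithfullyFlatSmall:proxy} to the faithfully flat map $R \to R'$: since $R' \Lotimes_R X$ is proxy small over $R'$, the complex $X$ is proxy small over $R$.
\end{proof}
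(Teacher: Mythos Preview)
Your overall strategy matches the paper's: interpret $\level^Q_Q(R'\Lotimes_R X)<\infty$ as saying $R'\Lotimes_R X$ is perfect over $Q$, apply a result of Dwyer--Greenlees--Iyengar to conclude $R'\Lotimes_R X$ is proxy small over $R'$, then descend along the faithfully flat map $R\to R'$ via Proposition~\ref{faithfullyFlatSmall}\eqref{faithfullyFlatSmall:proxy}. That is exactly what the paper does.

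However, there is a genuine confusion in the middle step. In the definition of finite CI-dimension, $Q$ is \emph{not} assumed to be regular; it is only assumed that the kernel of $Q\twoheadrightarrow R'$ is generated by a regular sequence. So $R'$ need not be a complete intersection ring, and your appeal in the plan to ``every object of $\dfbcat{R'}$ is proxy small'' (Corollary~\ref{locallyCIProxySmall} or \cite[Theorem~9.4]{Dwyer/Greenlees/Iyengar:2006b}) is not justified. The correct citation is \cite[Theorem~9.1]{Dwyer/Greenlees/Iyengar:2006b}, which takes as input precisely a surjection $Q\to R'$ with kernel generated by a regular sequence and a complex over $R'$ that is perfect over $Q$, and outputs proxy smallness over $R'$; this is what the paper invokes. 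Your ``concretely'' sentence also does not work as written: the Koszul complex on a generating set of the maximal ideal of $R'$ has support equal to the closed point, which in general is strictly smaller than $\supp_{R'}(R'\Lotimes_R X)$, so it cannot serve as the witness for proxy smallness via \ref{VirtProxyKoszulCx}. Once you replace the appeal to Theorem~9.4 by Theorem~9.1 and drop the Koszul paragraph, your proof coincides with the paper's.
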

\begin{proof}
Let $X$ be a complex in $\dfbcat{R}$ of finite CI-dimension and let $R \to R' \gets Q$ be a diagram of local homomorphisms satsifying the required conditions. Then $R' \Lotimes_R X$ has finite homology over $R'$ and in particular over $Q$. So $R' \Lotimes_R X$ is a perfect complex over $Q$. Then, by \cite[Theorem~9.1]{Dwyer/Greenlees/Iyengar:2006b}, the complex $R' \Lotimes_R X$ is proxy small over $R'$ and, by Proposition~\ref{faithfullyFlatSmall}~\eqref{faithfullyFlatSmall:proxy}, $X$ is proxy small in $\dcat{R}$. 
\end{proof}

\begin{discussion}
The definition of finite CI-dimension can be extended to a nonlocal ring $R$; for the definition see \cite[Definition~3.1]{SatherWagstaff:2004}. In particular, when a complex $X$ in $\dfbcat{R}$ has finite CI-dimension over $R$, then $X_\mathfrak{m}$ has finite CI-dimension over $R_\mathfrak{m}$ for all maximal ideals $\mathfrak{m}$. By Proposition~\ref{localGlobalProxySmall}, we can conclude that Proposition~\ref{finiteCIProxySmall} holds over nonlocal rings. 
\end{discussion}

The condition given in Corollary~\ref{locallyCIProxySmall} to test whether a ring is locally complete intersection, is difficult to use: It is hard to check whether every bounded complex with finite homology is proxy small. For local rings, the proof of \cite[Theorem~5.2]{Pollitz:2019} shows it is enough to test finitely many complexes of finite length homology for proxy smallness. The next theorem shows that for some rings it is enough to check one object for proxy smallness. 

Given a $k$-algebra $R$, the enveloping algebra of $R$ is $\env{R} \colonequals R \otimes_k R$. Then $\env{R}$ acts on $R$ diagonally. 

\begin{theorem} \label{kAlgebraLCI}
Let $k$ be a field and $R$ a $k$-algebra essentially of finite type over $k$. Then the following are equivalent
\begin{enumerate}
\item $R$ is locally complete intersection, and
\item $R$ is proxy small in $\dcat{\env{R}}$.
\end{enumerate}
\end{theorem}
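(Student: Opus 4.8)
The plan is to reduce the statement to Corollary~\ref{locallyCIProxySmall}, which characterizes locally complete intersection rings by the property that every object of $\dfbcat{R}$ is proxy small. So the real content is: for $R$ essentially of finite type over a field $k$, the single object $R \in \dcat{\env{R}}$ being proxy small forces \emph{every} object of $\dfbcat{R}$ to be proxy small, and conversely. The classical model for this is the Hochschild-cohomology characterization of smoothness/regularity: $R$ is smooth over $k$ exactly when $R$ is perfect over $\env{R}$, and here we are relaxing ``perfect'' to ``proxy small''.

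For the direction (1) $\Rightarrow$ (2): if $R$ is locally complete intersection, I would first note that $\env{R} = R \otimes_k R$ is again a commutative noetherian ring (essentially of finite type over $k$), and by Corollary~\ref{locallyCIProxySmall} it suffices to know $R \in \dfbcat{\env{R}}$ and that $\env{R}$ is itself locally complete intersection — but that is too strong and false in general, so instead I would argue directly. The key point is that proxy smallness of $R$ over $\env{R}$ is a local statement over $\env{R}$ by Proposition~\ref{localGlobalProxySmall}, so it is enough to check it at primes $\mathfrak{q} \in \Spec(\env{R})$. The multiplication map $\mu \colon \env{R} \to R$ is surjective, so the relevant primes are those containing $\ker\mu$; localizing, one is reduced to a complete local situation where the relation between the regular sequence cutting out $R$ in $\env{R}$ (locally) and the complete intersection presentation of the corresponding localization of $R$ can be made precise. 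Concretely, if $R_\mathfrak{p}$ is complete intersection then after completion $R_\mathfrak{p}$ is a quotient of a regular ring by a regular sequence, and the diagonal embedding realizes the completion of a localization of $\env{R}$ as (a flat extension of) a ring over which $R$ is cut out by a regular sequence; then $R$ is perfect, hence proxy small, over that ring, and Propositions~\ref{localGlobalProxySmall} and \ref{faithfullyFlatSmall} propagate this back to $\env{R}$.

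For the direction (2) $\Rightarrow$ (1): suppose $R$ is proxy small in $\dcat{\env{R}}$. For any prime $\mathfrak{p}$ of $R$, I want to show $R_\mathfrak{p}$ is complete intersection, and by Pollitz's theorem \cite[Theorem~5.2]{Pollitz:2019} it suffices to show every object of $\dfbcat{R_\mathfrak{p}}$ is proxy small. The mechanism is the standard one: $R$ proxy small over $\env{R}$ means $\RHom_{\env{R}}(R,-)$ is ``almost'' a perfect functor, and since any $M \in \dfbcat{R}$ satisfies $M \simeq R \Lotimes_{\env{R}} (R \otimes_k M)$ — viewing $R \otimes_k M$ as an $\env{R}$-module — the proxy smallness of $R$ over $\env{R}$ transfers, via Proposition~\ref{faithfullyFlatSmall} and change of rings, to proxy smallness of $M$ over $R$. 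One then localizes at $\mathfrak{p}$ and invokes Pollitz. I expect the main obstacle to be this transfer step: making precise that proxy smallness of the diagonal bimodule $R$ over $\env{R}$ implies proxy smallness of arbitrary $M \in \dfbcat{R}$ over $R$, which requires identifying $\supp_R M$ correctly after the base change along $k \to R$ (i.e.\ along $\env{R} \to R$) and controlling the support of the derived tensor product $R \Lotimes_{\env{R}} (R \otimes_k M)$; the finiteness of type over $k$ is what keeps $\env{R}$ noetherian and the relevant flat/finite maps under control, so that Propositions~\ref{localGlobalProxySmall}, \ref{localGlobalVirtuallySmall} and \ref{faithfullyFlatSmall} apply.
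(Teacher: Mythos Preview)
Your overall strategy---reduce both directions to Corollary~\ref{locallyCIProxySmall}---matches the paper, but there are genuine gaps in each direction.

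For (1)$\Rightarrow$(2) you correctly localize over $\env{R}$ via Proposition~\ref{localGlobalProxySmall} and restrict to primes $\mathfrak{q}\supseteq\ker\mu$. The missing input is Avramov's theorem \cite{Avramov:1975}: if $R_\mathfrak{p}$ is complete intersection then so is $\env{(R_\mathfrak{p})}=R_\mathfrak{p}\otimes_k R_\mathfrak{p}$. With this in hand, Corollary~\ref{locallyCIProxySmall} applied to $\env{(R_\mathfrak{p})}$ gives that $R_\mathfrak{p}$ is proxy small in $\dcat{\env{(R_\mathfrak{p})}}$; one then checks that for such $\mathfrak{q}$ the ring $(\env{R})_\mathfrak{q}$ is a further localization of some $\env{(R_\mathfrak{p})}$ and concludes. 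You explicitly dismiss the route ``show a suitable enveloping algebra is locally complete intersection'', but that is exactly what the paper does; your proposed alternative via completion and explicit regular sequences is not worked out and would essentially amount to reproving Avramov's result.

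For (2)$\Rightarrow$(1) your identity $M\simeq R\Lotimes_{\env{R}}(R\otimes_k M)$ is the right starting point, but your appeal to Proposition~\ref{faithfullyFlatSmall} is misplaced: the multiplication $\env{R}\to R$ is not flat in general, and no faithfully flat map is available to transport proxy smallness. The paper instead applies the exact functor $-\Lotimes_R X\colon\dcat{\env{R}}\to\dcat{R}$ directly to the witness $P$ for proxy smallness of $R$. From $R\Lotimes_R X=X$ one gets $\level_R^X(P\Lotimes_R X)<\infty$; from $\env{R}\Lotimes_R X=R\otimes_k X\in\Add(R)$ one gets that $P\Lotimes_R X$ has a finite projective resolution, and finiteness of its homology (hence perfectness) comes from being finitely built from $X$. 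The support equality $\supp_R(P\Lotimes_R X)=\supp_R(X)$, which you rightly flag as the obstacle, is not handled by any of the propositions you cite but by Neeman's classification \cite{Neeman:1992b}: since $P$ and $R$ have the same support over $\env{R}$ they generate the same \emph{localizing} subcategory of $\dcat{\env{R}}$, and this is preserved by the coproduct-preserving exact functor $-\Lotimes_R X$, so $P\Lotimes_R X$ and $X$ have the same support over $R$.
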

\begin{proof}
If $R$ is locally complete intersection, then its localizations $R_\mathfrak{p}$ are complete intersection. So, by \cite{Avramov:1975}, the enveloping algebra $\env{(R_\mathfrak{p})}$ is complete intersection. By Corollary~\ref{locallyCIProxySmall}, every object in $\dfbcat{\env{(R_\mathfrak{p})}}$ is proxy small and thus $R_\mathfrak{p}$ is proxy small in $\dcat{\env{(R_\mathfrak{p})}}$. 

It remains to show $R$ is proxy small in $\dcat{\env{R}}$. By Proposition~\ref{localGlobalProxySmall}, it is enough to show $R_\mathfrak{q}$ is proxy small in $\dcat{(\env{R})_\mathfrak{q}}$ for all prime ideals $\mathfrak{q}$ of $\env{R}$. 

Let $\mu \colon \env{R} \to R$ be the multiplication map, and $\mu^a$ the induced map on spectra. For a prime ideal $\mathfrak{q}$ of $\env{R}$, we have $R_\mathfrak{q} = R_{(\mathfrak{q},\ker(\mu))}$. Now there are two cases, either $(\mathfrak{q},\ker(\mu)) = \env{R}$, or $(\mathfrak{q},\ker(\mu)) = \mu^a(\mathfrak{p})$ for some prime ideal $\mathfrak{p}$ of $R$. In the first case, one has $R_\mathfrak{q} = 0$ and there is nothing to prove. In the second case, we have $R_\mathfrak{q} = R_\mathfrak{p}$. Now note, that $\mathfrak{q}$ is a prime ideal of $\env{(R_\mathfrak{p})}$, and $(\env{(R_\mathfrak{p})})_\mathfrak{q} = (\env{R})_\mathfrak{q}$. So since $R_\mathfrak{p}$ is proxy small in $\dcat{\env{(R_\mathfrak{p})}}$, it is proxy small in $\dcat{(\env{R})_\mathfrak{q}}$. 

For the converse direction, assume $R$ is proxy small in $\dcat{\env{R}}$. That is there exists a complex $P$ in $\dcat{\env{R}}$, such that 
\[
\level_{\env{R}}^{\env{R}}(P) < \infty \quad\text{and}\quad \level_{\env{R}}^R(P) < \infty \quad\text{and}\quad \supp_{\env{R}}(P) = \supp_{\env{R}}(R)\,.
\]
Let $X \in \dfbcat{R}$. By Corollary~\ref{locallyCIProxySmall}, it is enough to show $X$ is proxy small in $\dcat{R}$. Any complex $Y$ in $\dcat{\env{R}}$ has a left and a right $R$-action. Thus $Y \Lotimes_R X$ has a left $R$-action through the left $R$-action of $Y$. This induces the exact functor
\[
- \Lotimes_R X \colon \dcat{\env{R}} \to \dcat{R}
\]
and, by \eqref{eq:levelExactFunctor}, one has 
\[
\level_R^{R \otimes_k X}(P \Lotimes_R X) < \infty \quad\text{and}\quad \level_R^X(P \Lotimes_R X) < \infty\,.
\]
The object $R \otimes_k X$ is a, possible infinite, direct sum of suspensions of $R$. Let $\Add(R)$ be the subcategory of all such complexes. Then
\[
R \otimes_k X \in \Add(R) \quad\text{and so}\quad \level_R^{\Add(R)}(P \Lotimes_R X) < \infty\,.
\]
In particular $P \Lotimes_R X$ has a finite resolution by projective modules. Since $P \Lotimes_R X$ is generated by $X$, it has finitely generated total homology. Thus $P \Lotimes_R X$ has a finite resolution by finitely generated projective modules, that is it is perfect. 

It remains to show $P \Lotimes_R X$ has the same support as $X$. A localizing subcategory generated by an object $X$ in $\dcat{R}$ is the smallest triangulated subcategory of $\dcat{R}$, that is closed under direct sums and contains $X$. By \cite[Theorem~2.8]{Neeman:1992b}, two complexes have the same support if and only if they generate the same localizing subcategory. Now since $P$ and $R$ have the same support over $\env{R}$, they have the same localizing subcategory in $\dcat{\env{R}}$. So $P \Lotimes_R X$ and $R \Lotimes_R X = X$ generate the same localizing subcategories in $\dcat{R}$ and so have the same support over $R$. 
\end{proof}

This characterization is similar to the characterization of a smooth ring: If $k$ is a field and $R$ a $k$-algebra essentially of finite type over $k$, then $R$ is smooth if and only if $R$ is small in $\dcat{\env{R}}$. 

For a generalization of this characterization, see \cite{Briggs/Iyengar/Letz/Pollitz:2020}. 

\begin{discussion}
As in Theorem~\ref{kAlgebraLCI} let $k$ be a field and $R$ a $k$-algebra of essentially finite type over $k$. If $R$ is locally complete intersection, and $Q \twoheadrightarrow R$ is a surjective map of $k$-algebras with $Q$ a regular ring and kernel $I$, then $R$ generates the small object $R \Lotimes_Q R$ in $\dcat{\env{R}}$. Adapting the argument of \cite[Theorem~9.1]{Dwyer/Greenlees/Iyengar:2006b} the generation time is bound below by
\[
\sup\set{\codim(R_\mathfrak{m})}{\mathfrak{m} \in \Max(R)} + 1 \leq \level_{\env{R}}^R(R \Lotimes_Q R)\,,
\]
and using \cite[Theorem~11.3]{Avramov/Buchweitz/Iyengar/Miller:2010} bound above by
\[
\level_{\env{R}}^R(R \Lotimes_Q R) \leq \sup\set{\height(I_\mathfrak{m})}{\mathfrak{m} \in \Max(R)} + 1\,.
\]
\end{discussion}

\subsection*{Acknowledgments} I thank my advisor Srikanth Iyengar for a lot of helpful discussions and reading many versions of this paper. I also thank Josh Pollitz for his interest in this work and pointing out the connection of Corollary~\ref{levelFaithfullyFlat} to the question posed in \cite[Remarks~9.6]{Dwyer/Greenlees/Iyengar:2006b}, and Jian Liu for his comments.


\begin{thebibliography}{ABIM10}
\bibitem[ABIM10]{Avramov/Buchweitz/Iyengar/Miller:2010}
Luchezar~L. Avramov, Ragnar-Olaf Buchweitz, Srikanth~B. Iyengar, and Claudia
  Miller, \emph{Homology of perfect complexes}, Adv. Math. \textbf{223} (2010),
  no.~5, 1731--1781. \MR{2592508}

\bibitem[AF91]{Avramov/Foxby:1991}
Luchezar~L. Avramov and Hans-Bj{\o}rn Foxby, \emph{Homological dimensions of
  unbounded complexes}, J. Pure Appl. Algebra \textbf{71} (1991), no.~2-3,
  129--155. \MR{1117631}

\bibitem[AGP97]{Avramov/Gasharov/Peeva:1997}
Luchezar~L. Avramov, Vesselin~N. Gasharov, and Irena~V. Peeva, \emph{Complete
  intersection dimension}, Inst. Hautes {\'E}tudes Sci. Publ. Math. \textbf{86}
  (1997), 67--114 (1998). \MR{1608565}

\bibitem[AIL10]{Avramov/Iyengar/Lipman:2010}
Luchezar~L. Avramov, Srikanth~B. Iyengar, and Joseph Lipman, \emph{Reflexivity
  and rigidity for complexes. {I}. {C}ommutative rings}, Algebra Number Theory
  \textbf{4} (2010), no.~1, 47--86. \MR{2592013}

\bibitem[Avr75]{Avramov:1975}
Luchezar~L. Avramov, \emph{Flat morphisms of complete intersections}, Dokl.
  Akad. Nauk SSSR \textbf{225} (1975), no.~1, 11--14. \MR{0396558}

\bibitem[Bel08]{Beligiannis:2008}
Apostolos Beligiannis, \emph{Some ghost lemmas}, Lecture Notes for the
  Conference ``The Representation Dimension of Artin Algebras'', 2008.

\bibitem[Ber09]{Bergh:2009}
Petter~Andreas Bergh, \emph{On complexes of finite complete intersection
  dimension}, Homology Homotopy Appl. \textbf{11} (2009), no.~2, 49--54.
  \MR{2529232}

\bibitem[BFK12]{Ballard/Favero/Katzarkov:2012}
Matthew Ballard, David Favero, and Ludmil Katzarkov, \emph{{O}rlov spectra:
  bounds and gaps}, Invent. Math. \textbf{189} (2012), no.~2, 359--430.
  \MR{2947547}

\bibitem[BILP20]{Briggs/Iyengar/Letz/Pollitz:2020}
Benjamin Briggs, Srikanth~B. Iyengar, Janina~C. Letz, and Josh Pollitz,
  \emph{Locally complete intersection maps and the proxy small property}, arXiv
  e-prints (2020), 19.

\bibitem[BM67]{Bass/Murthy:1967}
Hyman Bass and M.~Pavaman Murthy, \emph{{G}rothendieck groups and {P}icard
  groups of abelian group rings}, Ann. of Math. (2) \textbf{86} (1967), 16--73.
  \MR{0219592}

\bibitem[BvdB03]{Bondal/VanDenBergh:2003}
Alexey~I. Bondal and Michel van~den Bergh, \emph{Generators and
  representability of functors in commutative and noncommutative geometry},
  Mosc. Math. J. \textbf{3} (2003), no.~1, 1--36, 258. \MR{1996800}

\bibitem[CFH06]{Christensen/Frankild/Holm:2006}
Lars~Winther Christensen, Anders Frankild, and Henrik Holm, \emph{On
  {G}orenstein projective, injective and flat dimensions---a functorial
  description with applications}, J. Algebra \textbf{302} (2006), no.~1,
  231--279. \MR{2236602}

\bibitem[Chr98]{Christensen:1998}
J.~Daniel Christensen, \emph{Ideals in triangulated categories: phantoms,
  ghosts and skeleta}, Adv. Math. \textbf{136} (1998), no.~2, 284--339.
  \MR{1626856}

\bibitem[DGI06a]{Dwyer/Greenlees/Iyengar:2006a}
William~G. Dwyer, John P.~C. Greenlees, and Srikanth~B. Iyengar, \emph{Duality
  in algebra and topology}, Adv. Math. \textbf{200} (2006), no.~2, 357--402.
  \MR{2200850}

\bibitem[DGI06b]{Dwyer/Greenlees/Iyengar:2006b}
\bysame, \emph{Finiteness in derived categories of local rings}, Comment. Math.
  Helv. \textbf{81} (2006), no.~2, 383--432. \MR{2225632}

\bibitem[Fox79]{Foxby:1979}
Hans-Bj{\o}rn Foxby, \emph{Bounded complexes of flat modules}, J. Pure Appl.
  Algebra \textbf{15} (1979), no.~2, 149--172. \MR{535182}

\bibitem[Har66]{Hartshorne:1966}
Robin Hartshorne, \emph{Residues and duality}, Lecture notes of a seminar on
  the work of A. Grothendieck, given at Harvard 1963/64. With an appendix by P.
  Deligne. Lecture Notes in Mathematics, No. 20, Springer-Verlag, Berlin-New
  York, 1966. \MR{0222093}

\bibitem[Hop87]{Hopkins:1987}
Michael~J. Hopkins, \emph{Global methods in homotopy theory}, Homotopy theory
  ({D}urham, 1985), London Math. Soc. Lecture Note Ser., vol. 117, Cambridge
  Univ. Press, Cambridge, 1987, pp.~73--96. \MR{932260}

\bibitem[IK06]{Iyengar/Krause:2006}
Srikanth~B. Iyengar and Henning Krause, \emph{Acyclicity versus total
  acyclicity for complexes over {N}oetherian rings}, Doc. Math. \textbf{11}
  (2006), 207--240. \MR{2262932}

\bibitem[Kaw02]{Kawasaki:2002}
Takesi Kawasaki, \emph{On arithmetic {M}acaulayfication of {N}oetherian rings},
  Trans. Amer. Math. Soc. \textbf{354} (2002), no.~1, 123--149. \MR{1859029}

\bibitem[Kel65]{Kelly:1965}
G.~Maxwell Kelly, \emph{Chain maps inducing zero homology maps}, Proc.
  Cambridge Philos. Soc. \textbf{61} (1965), 847--854. \MR{0188273}

\bibitem[Nag62]{Nagata:1962}
Masayoshi Nagata, \emph{Local rings}, Interscience Tracts in Pure and Applied
  Mathematics, No. 13, Interscience Publishers a division of John Wiley \&
  Sons\,New York-London, 1962. \MR{0155856}

\bibitem[Nee92]{Neeman:1992b}
Amnon Neeman, \emph{The chromatic tower for {$D(R)$}}, Topology \textbf{31}
  (1992), no.~3, 519--532, With an appendix by Marcel B{\"o}kstedt.
  \MR{1174255}

\bibitem[O{\v{S}}12]{Oppermann/Stovicek:2012}
Steffen Oppermann and Jan {\v{S}}{\v{t}}ov{\'\i}{\v{c}}ek, \emph{Generating the
  bounded derived category and perfect ghosts}, Bull. Lond. Math. Soc.
  \textbf{44} (2012), no.~2, 285--298. \MR{2914607}

\bibitem[Pol19]{Pollitz:2019}
Josh Pollitz, \emph{The derived category of a locally complete intersection
  ring}, Adv. Math. \textbf{354} (2019), 106752, 18. \MR{3988642}

\bibitem[Rob80]{Roberts:1980}
Paul Roberts, \emph{Homological invariants of modules over commutative rings},
  S{\'e}minaire de Math{\'e}matiques Sup{\'e}rieures [Seminar on Higher
  Mathematics], vol.~72, Presses de l'Universit{\'e} de Montr{\'e}al, Montreal,
  Que., 1980. \MR{569936}

\bibitem[Rou08]{Rouquier:2008}
Rapha{\"e}l Rouquier, \emph{Dimensions of triangulated categories}, J. K-Theory
  \textbf{1} (2008), no.~2, 193--256. \MR{2434186}

\bibitem[SW04]{SatherWagstaff:2004}
Sean Sather-Wagstaff, \emph{Complete intersection dimensions for complexes}, J.
  Pure Appl. Algebra \textbf{190} (2004), no.~1-3, 267--290. \MR{2043332}

\end{thebibliography}
\end{document}